\newcommand{\la}{\langle}
\newcommand{\ra}{\rangle}
\newcommand{\R}{\mathbb{R}}
\newcommand{\Prob}{\mathbb{P}}
\newcommand{\p}{\tilde{p}}
\newcommand{\z}{\tilde{z}}
\newcommand{\x}{\tilde{x}}
\newcommand*\samethanks[1][\value{footnote}]{\footnotemark[#1]}
 \newtheorem{thm}{Theorem}[section]
\newtheorem{remark}{Remark}
 \newtheorem{prop}{Proposition}[section]
 \newtheorem{lem}{Lemma}[section]
 \newtheorem{ass}{Assumption}
 \newtheorem{dfn}{Definition}
\title{Network manipulation algorithm based on inexact alternating minimization}
\author{David Müller \thanks{Department of Mathematics, Chemnitz University of Technology,
Reichenhainer Str. 41, 09126
Chemnitz, Germany; e-mail: david.mueller@mathematik.tu-chemnitz.de (corresponding author), vladimir.shikhman@mathematik.tu-chemnitz.de}  \and Vladimir Shikhman\samethanks[1]  } 
\providecommand{\keywords}[1]{\textbf{\textit{Keywords---}} #1}
\begin{document}

\maketitle
\vspace{18mm} \setcounter{page}{1}

\begin{abstract}	
In this paper, we present a network manipulation algorithm based on an alternating minimization scheme from \cite{nesterov2020soft}. In our context, the latter mimics the natural behavior of agents and organizations operating on a network. 
By selecting starting distributions, the organizations determine the short-term dynamics of the network. While choosing an organization in accordance with their manipulation goals, agents are prone to errors given by discrete choice probabilities. We extend the analysis of our algorithm to the inexact case, where the corresponding subproblems can only be solved with numerical inaccuracies. The parameters reflecting the imperfect behavior of agents and the credibility of organizations, as well as the condition number of the network transition matrix have a significant impact on the convergence of our algorithm. 
Namely, they do not only improve the rate of convergence, but also reduce the accumulated errors.
\end{abstract}
\keywords{network manipulation, inexact alternating minimization, discrete choice}

\section{Introduction}
Networks naturally occur in many areas such as economics, computer science, chemistry or biology. A common way to model scenarios within networks is to use Markov chains. For a finite state space, a transition matrix describes the structure of changes on the chain's states \cite{gagniuc2017markov}. Usually, the iterative process of repeated transitions over the states provides a stationary distribution. 
However, if the considered time horizon is short, a question arises on how to efficiently manipulate the distribution of information within the network. As an obvious choice for manipulation, each agent may start with an initial distribution and spread the information by communicating with the neighbors. But, since the own manipulation power is often limited, it is quite reasonable for an agent to engage intermediary organizations instead. This could be due to the restricted access to the parts of the network, revealing the manipulation interests too apparently, or due to the lack of knowledge about the network structure. Possible examples include:
\begin{itemize}
	\item[] {\bf Influencers.} Companies, who want to credibly advertise their products or services via social media channels, pay influencers on social media platforms. They act as a part of the network and spread information on the products or services.
	\item[] {\bf Search engines.} Web site owners try to increase the visibility of their web sites. In order to find proper content, most of the Internet users enter a query into a web search engine. Thus, these query results    strongly influence the short term behavior of the users.
	\item[] {\bf Conspiracy theory.} Agents try to spread false information for different interests. 
	In order to spread fake news, agents have to rely on different distribution channels, such as groups in social media networks or Internet blogs.  
\end{itemize} 
Note that two of our examples are related to the manipulation of information in a social network. There is a growing literature concerning this topic, see  \cite{acemoglu2011opinion} for an overview.  Most of the models describe the update of opinions or beliefs, see e.\,g. \cite{degroot1974reaching}, where the latter is done according to a convex combination of other network members' opinions. Applying traditional techniques from the analysis of Markov chains, the formation of a consensus is examined. In these approaches, manipulation is modeled by modifying the transition matrix, e.\,g. introducing randomness \cite{acemoglu2011opinion}. In \cite{forster2014trust},   manipulation in a model of opinion formation is studied. There, the weights of the transition matrices can be changed by agents, while all starting distributions are fixed. Our model differs from those in the existing literature, since we examine how the information regarding a topic is distributed among network participants through intermediaries. Loosely speaking, we analyze who knows how much and how this information state can be efficiently manipulated by engaging intermediaries. As mentioned in \cite{acemoglu2011opinion}, one central component of opinion formation is how agents update their prior beliefs based on new information. In this paper, we also contribute to the opinion formation, because we investigate a way to manipulate the acquirement of information by employing the network of information sources. Our goal is not to learn the complete structure of the  network, for which  usually hidden Markov models are applied, see e.\,g. \cite{yang1997human}. Instead, organizations should be able to select a starting distribution aiming to arrive at a certain information state after a number of iterations. Agents are choosing among the intermediare organizations to boost manipulation. This is at the core of our network manipulation algorithm. Note that a similar problem has been analyzed  in \cite{lindqvist1977fast}, where the author applies decision-theoretic techniques to observe a state at a time and obtain information about the initial state. 

Let us comment on the mathematics behind the proposed network manipulation algorithm. It is motivated by \cite{nesterov2020soft} where a new technique for soft clustering is introduced. For this, voters and political parties alternately solve their subproblems, yielding an alternating minimization scheme. The behavior of voters turns out to be in accordance with the well known multinomial logit model from discrete choice theory. Namely, the voters choose rationally among the parties, but are prone to random errors, see e.\,g. \cite{palma}. The parties update their political positions depending on how many voters they attract. Overall, the resulting soft clustering is given in terms of probabilities from the multinomial logit model. In this paper, we generalize the idea suggested in \cite{nesterov2020soft} to a broader class of discrete choice probabilities. This is done by presenting a network manipulation model based on alternating steps performed by agents and organizations. Agents try to manipulate a network by choosing intermediary organizations for helping in that. In order to select among the organizations, agents observe which of them better manipulate the network in comparison to agents' goals. While doing so, agents are prone to random errors, which lead to choice probabilities following certain discrete choice models examined in our previous paper \cite{prox}. Altogether, we show how the alternating minimization scheme of \cite{nesterov2020soft} can be applied for network manipulation. Additionally, we present an inexact version of the alternating minimization scheme introduced in \cite{nesterov2020soft}. Inexactness is due to the fact that the subproblems of agents and/or organizations may not be solved exactly and may suffer from numerical inaccuracies. Overall, we conclude that the agents' imperfect behavior and organizations' conservatism in profit maximization reduce the accumulated errors.

{\bf Notation.}  
In this paper, we mainly focus on subspaces of $\R^n$ and $\R^{m \times n}$, where the former is the space of $n$-dimensional column vectors $$x =
\left(x^{(1)}, \dots , x^{(n)}\right)^T,$$
and the latter denotes the linear space of $(m \times n)$-matrices.
We denote by $e_j \in \R^n$ the $j$-th coordinate vector of $\R^n$ and write  $e$ for the vector of an appropriate dimension whose components are equal to one. By $\R^n_+$ we denote the set of all vectors with nonnegative components and notation $\Delta_n$ is used for the standard simplex
\[
\Delta_n = \left\{ x \in \R^n_+ : \sum_{i=1}^{n} x^{(i)} = 1\right\}.
\] We use the norms for $x \in \R^n$: 
\[
\|x\|_2 = \left[\sum_{i=1}^{n}  x^{(i)2}\right]^\frac{1}{2}, \quad \|x\|_1 = \sum_{i=1}^{n} \left| x^{(i)}\right|, \quad
\|x\|_\infty = \max_{i=1, \ldots, n} \left| x^{(i)}\right|.
\]
 For $x, s \in \R^n$ we use the standard scalar product:
\[
\la x, s \ra = \sum\limits_{i=1}^n x^{(i)} s^{(i)}.
\]
For matrices $A,B \in \R^{m\times n}$ the inner product is defined via the trace:
\[
\la A,B\ra = \mbox{Tr}(A^T\cdot B).
\]


A function $F:Q \to \R$ is called $\beta$-strongly convex on a convex and closed set $Q \subset \R^n$ w.r.t. a norm $\|\cdot\|$ if for all $x,y \in Q$ and $\alpha \in [0,1]$ it holds:
	\[
	F(\alpha x +(1-\alpha)y) \leq \alpha F(x) + (1-\alpha) F(y) - \alpha (1-\alpha) \cdot \frac{\beta}{2} \|x-y\|^2.
	\]
The positive constant $\beta$ is called the  convexity parameter of $F$. If $\beta=0$, we call $F$ convex.
A function $\pi$ is $\beta$-strongly concave if  $-\pi$ is  $\beta$-strongly convex.
For a convex function $F:Q \to \R$ the set $\partial F(x)$ represents its subdifferential at $x \in Q$, i.e.
\[
\partial F(x) = \left\{ g \in \R^n : F(y) \ge F(x) + \la g, y - x\ra \, \mbox{for all} \, y \in Q \right\}.
\] 
Its convex conjugate is
\[F^*(s) = \underset{x \in \mathbb{R}^n}{\sup}\left\{\left\langle x,s \right\rangle - F(x)\right\},\] 
where $s \in R^n$ is a vector of dual variables.
We denote by $\nabla F(x)$ the gradient of a differentiable function $F$ at $x$.

\section{Manipulation Model}\label{ch:model}
Let us introduce our model in order to later construct a manipulation algorithm based on  interaction  within a network. 

\subsection{Interaction network}
A central aspect in our model is a network with $n$ nodes.  The structure of this network describes how nodes interact among each other, e.\,g. how persons receive and exchange information.  Thereby, a link from node $j$ to $i$ represents a connection. In the context of an information network, the latter would depict that  person $i$ acquires information from person $j$.  We summarize the data in a transition matrix $M = \left(M_{ij}\right)_{i,j =1}^{n}$, where $M_{ij}$ denotes the transition probability of node $j$ to  node $i$. Hence, the following holds:
\[\sum\limits_{i=1}^{n} M_{ij} = 1 \quad \text{for} \quad j=1,\ldots, n.\]  
$M$ is a column stochastic matrix, i.e. $ M\ge 0, \; e^T\cdot M = e^T$. Our model describes the process of information acquirement rather than the formation of opinions as e.\,g. in \cite{forster2014trust}. We are interested in a few periods of interactions, thus, we take the transition matrix as fixed. 
The interaction  causes different states of  the network, based on the connections of  its nodes. The state of a network can be represented as an element of the standard simplex in $\R^n$ dependent on time variable. We call a vector $x(t) \in \Delta_n$ a state of a network at time $t$. Such a state reflects the  value  each node possesses after an interaction with other nodes. This could be for example the amount of information a person possesses in relation to the others or the market share of a company. 

The dynamics of interaction can be described  by an  iterative process.  The latter starts with a vector $x(0) \in \Delta_n$ and then the nodes interact repeatedly with each other. Thus, the iterative process is given by   
\begin{equation}\label{eq:iter.proc}
	x(t) = M\cdot x(t-1)= \ldots = M^t\cdot x(0).
\end{equation}
Obviously, all $x(t)$'s generated according to this process are elements of $\Delta_n$. Our idea is closely related to  the concept of network rankings, such as the famous {\it PageRank} \cite{page1999pagerank}. However, we focus on a limited, mostly small number, of interaction periods. Within an information network, persons would typically exchange information for a few periods, before they make a decision. This short term behavior endows the starting vector $x(0) \in \Delta_n$ with importance. For the sake of brevity, we  drop the time index by writing $x=x(0)$.

\subsection{Agents}\label{sec:agents}
Let us assume that agents want to manipulate the resulting state of a network in favor of their own interests. 
Though they aspire certain network states, agents face some challenges by trying to manipulate a network. Often, they do not have knowledge of the network structure. Additionally, there are many situations, where the agents can't participate in the network because they cannot connect to a node without revealing their intentions, e.\,g. companies cannot credibly advertise their products by themselves. There might also be networks, where an agent could interact in, but is restricted to start with a fixed vector. In particular, if the information is just spread uniformly. Instead, agents could instruct organizations to manipulate the interaction in order to  reach an aspired state of the network. 
The organizations often have more information or at least experience about the structure of a network. In fact, they could even operate it.
The agents choose among $K$ organizations, where each organization provides an observable utility $u^{(k)}$. 
This discrete choice behaviour we describe by means of the so-called additive random utility models. The additive decomposition of utility goes back to psychological experiments accomplished in the 1920's \cite{thurstone}. A formal description of this framework has been first introduced in an economic context \cite{gev}, where rational decision-makers choose from a finite set of mutually exclusive alternatives $\{1, \ldots, K\}. $ Although the decision rule follows a rational behavior, agents are prone to random errors. The latter describe decision-affecting features which cannot be observed. Each alternative $k = 1,\ldots, K $ provides the utility \[u^{(k)} + \epsilon^{(k)}, \] where $ u^{(k)} \in \mathbb{R} $ is the deterministic utility part of the $k$-th alternative and $\epsilon^{(k)} $ is its stochastic error.  We use the following notation for the vectors of deterministic utilities and of random utilities, respectively:
\[
u = \left(u^{(1)}, \ldots, u^{(K)}\right)^T, \quad \epsilon = \left(\epsilon^{(1)}, \ldots, \epsilon^{(K)}\right)^T. 
\] 
The probabilistic framework yields choice probabilities for each alternative:
\begin{equation}\label{eq:prob}
	p^{(k)} = \Prob \left( u^{(k)} + \epsilon^{(k)} = \max_{1 \leq m \leq K} u^{(m)} + \epsilon^{(m)}\right), \quad k=1, \ldots, K. 
\end{equation}
As the consumers behave rationally, their surplus is given by the expected maximum utility of their decision:
\begin{equation}\label{eq:cons_surplus}
E(u) = \mathbb{E}_\epsilon  \left(\max_{1 \leq k \leq K} u^{(k)} + \epsilon^{(k)}\right).   
\end{equation}
It is well known that the surplus function is convex \cite{palma}.
Additionally we make a standard assumption concerning the distribution of random errors. 
\begin{ass}[]\label{ass:joint density}
	The random vector $\epsilon $ follows a joint distribution with zero mean that is absolutely continuous with respect to the Lebesgue measure and fully supported on $\mathbb{R}^K$.
\end{ass} 

\begin{ass}\label{ass:agents_behavior}	 The differences $\epsilon^{(k)} - \epsilon^{(m)}$  of random errors have finite modes for all $k,m=1,\ldots,K$.
\end{ass}

Assumption \ref{ass:joint density} guarantees that no ties in occur in \eqref{eq:cons_surplus}, which provides differentiability of the surplus function.   Further, the gradient of $E$ corresponds to the vector of choice probabilities, which is known as the Williams-Daly-Zachary theorem \cite{gev}, i.e.
\begin{equation}\label{eq:Daly}
\frac{\partial E}{\partial u^{(k)}} = p^{(k)}, \quad k=1, \ldots, K.
\end{equation}
Hence, each component of the gradient of $E$  yields  the probability that alternative $k$ provides the maximum utility among all alternatives.

Another equivalent representation of choice probabilities can be obtained by means of the convex conjugate of the surplus function ${E^*: \R^K \to \R \; \cup \; \left\{\infty\right\}}$:
\[
E^*(p) = \sup_{u \in \R^K} \left\{ \la p, u \ra - E(u)\right\},
\]
where $p = \left(p^{(1)}, \ldots, p^{(K)}\right)^T$ is the vector of dual variables.
In view of conjugate duality and , the vector of choice probabilities can be derived from an optimization problem of rational inattention, see e.\,g. \cite{shum} and \cite{prox}. Indeed, it has been shown that under Assumption \ref{ass:agents_behavior}  the  vector of choice probabilities $\Prob$ is the unique solution of 
\begin{equation}\label{eq:conj_dual}
\underset{p  \in \Delta_K}{\max} \left\{\la u,p \ra - E^*(p)\right\}.
\end{equation}

Now, we assume that there are $N$ agents trying to manipulate the network.
Each agent $i$ has an aspired state of network which we denote by $v_i \in \Delta_n$. 
  In order to reach the aspired state, agents can choose  among $K$ organizations.  The $k$-th organization is able to manipulate the interaction dynamics  in the network, which yields at time $t$ a state of a network $x_k(t) \in \Delta_n, \; k=1, \ldots, K$. 
In general agents prefer  organizations which provide a network state in line with the states they desire such as an aspired market shares distribution or state of information.  In order to assess the outcome of a manipulation, any  agent $i$ has to compare $K$  distances, i.e.
\[
\|v_i - x_k(t)\|_2 , \quad k=1, \ldots, K,
\]
respectively
\begin{equation}\label{eq:dist}
\|v_i - M^t\cdot x_k\|_2 , \quad k=1, \ldots, K.
\end{equation}
Note that \eqref{eq:dist} provides a way for agent $i$ to observe the utility of choosing the $k$-th organization. 
The network state at time $t$ is observable, so any agent is able to check, if an organization has manipulated the network satisfactorily. Let us put all the states   in a matrix, which yields a way to summarize all the states of a network at time $t$ in one variable, i.e.
\[
X(t)= \left(x_1(t), \ldots, x_K(t)\right) \in \Delta_n^K.
\] 
The matrix above can also be expressed in terms of the starting vectors, by defining 
\[
X = \left(x_1, \ldots, x_K\right) \in \Delta_n^K,
\]
which enables us to write 
\begin{equation}\label{eq:matrix}
	X(t) = M^t \cdot X.
\end{equation}

We define a vector valued function  $g_i: \Delta_n^K \rightarrow \mathbb{R}_+^K$ for any agent $i$, which stores  all these distances of the $i$-th agent and, hence, depends on a matrix $X$ as input variable:
\[
g_i(X) = \left(\|v_i - M^t\cdot x_1\|_2, \ldots, \|v_i - M^t\cdot x_K\|_2\right)^T,\quad i=1,\ldots, N. 
\]
We write in matrix form:
\[
   G(X) = \left(g_1(X), \ldots, g_N(X)\right) \in \R^{K \times N}.
\]
In view of additive random utility models, $g_i(\cdot)$ provides a way to characterize the observable utility $u_i$ by setting 
\[
 u_i = -g_i(X), \quad i=1,\ldots,N.
\]
Hence, the  vector of the $i$-th agent choice probabilities has entries
\begin{equation}\label{eq:Ag.ChoiceProb}
	p_i^{(k)}(X) = \Prob \left( -g_i^{(k)}(X) + \epsilon_i^{(k)} = \max_{1 \leq m \leq K} -g_i^{(m)}(X) + \epsilon_i^{(m)}\right), \quad k=1, \ldots, K. 
\end{equation}
Equivalently, $p_i(X)$ solves the following rational inattention problem:
\[
  \min_{p  \in \Delta_K} \la g_i(X),p \ra + E_i^*(p).
\]
Let us stack the choice probabilities of all the agents into a matrix and call the latter choice matrix:
\begin{equation}\label{eq:ChoiceMatr}
P(X) = \left(p_1(X), \ldots, p_N(X)\right) \in \Delta_K^N.
\end{equation}
Similarly to the choice matrix, we write $ P \in \Delta_K^N$
for any matrix of  probability vectors, 
i.e. $P = \left(p_1, \ldots, p_N\right)$ with $p_i \in \Delta_K$, $i=1, \ldots, N$.

\subsection{Organizations}
Let us describe the behavior of advertising organizations. Their goal is to attract agents as clients by providing them with additional manipulation power. This is done by choosing an appropriate starting distribution, thus, the communication process is initialized by organizations. By strategic decisions, such as substantial alignment, design, product placements or personal relations, or by direct decisions, such as ranking of a website as result of a certain query or advertising products directly on a marketplace, the organizations determine these vectors, which reflect a network state before interaction  starts. 
In order to attract the $i$-th agent with aspired state $v_i$, the $k$-th organization selects a starting distribution $x_k \in \Delta_n$ such that $\left\|v_i - M^t\cdot x_k \right\|_2$ becomes small. The organization's goal is to acquire as many agents as possible by simultaneously satisfying the corresponding aspired states. However, the agents are not necessarily equally important for the organization. Instead, the latter primary wants to  please agents, who already prefer the organization compared to other competitors. Let us state these considerations in a formal way.  An organization $k$ observes to which extent the agents choose it, i.e. quantified by choice probabilities $p_i^{(k)}$, $i=1, \ldots, N$. Thus, the $k$-th organization measures its performance by the following objective:
\begin{equation}\label{eq:org.WeightedSum}
	 \sum_{i=1}^{N} p_i^{(k)}\cdot \|v_i-M^t\cdot x_k\|_2.
\end{equation}
Yet, an organization's choice of the manipulation distribution not only depends on the agents' aspired states, but also on its own objectives. This reflects, that an organization might also aspire a certain state of the network in order to gain profits from the network participants. Therefore, we introduce a payoff function for organization $k$, which depends on the latter's caused state of manipulation:
\begin{equation}\label{eq:Payoff.Fct}
\pi_k\left(M^t\cdot x_k\right).
\end{equation}
Let us illustrate by examples how a network state could affect the payoff of  the $k$-th organization. Groups in social media platforms might avoid sharing  information with persons who have contrary opinions, such that no arguments against their theories or fake news are communicated. Prohibiting or restricting persons' access to information might be a worthwhile purpose in an information network. Particularly, this is interesting in situations, where direct manipulation of opinions is difficult. Since the authors in \cite{acemoglu2011opinion} mention the source of information as a key component of opinion formation, the manipulation of the information acquirement process contributes to the tampering of opinion formation. A social media influencer might loose credibility of her followers, if the latter find out about an unacceptable advertise. We state an assumption concerning the payoff functions.
\begin{ass}\label{ass:Payoff.Fct}
	The payoff function $\pi_k$ is $\tau_k$-strongly concave w.r.t. the norm $\|\cdot\|_2$ for all $k=1, \ldots, K$.
\end{ass} 
Altogether, the objective function of the $k$-th organization incorporates the both goals:
\begin{equation}\label{eq:org.ObjFunct}
	\sum_{i=1}^{N} p_i^{(k)}\cdot \|v_i-M^t\cdot x_k\|_2 
	- \frac{1}{\eta_k} \cdot \pi_k\left(M^t\cdot x_k\right),
\end{equation}
where $\eta_k > 0$ is a regularization parameter, which shows the importance of payoffs generated by the network. Note that thereby, small values of $\eta_k$ indicate a more restrictive behavior of the $k$-th organization, meaning the latter rather focuses on its own interests than to freely adjust the manipulation distribution according to the agents' aspired states.  For a given choice matrix $P \in \Delta_K^N$, network $M$ and   time $t$, the $k$-th organization chooses its optimal starting distribution $x_k \in \Delta_n$ by solving 
	\begin{equation}\label{eq:OrgMin}
	 \min_{x_k \in \Delta_n} \sum_{i=1}^{N} p_i^{(k)}\cdot \|v_i-M^t\cdot x_k\|_2 - \frac{1}{\eta_k} \cdot \pi_k\left(M^t\cdot x_k\right).
	\end{equation} 
For now, we assume that the optimization problems given in \eqref{eq:OrgMin} have unique solutions  for any choice matrix $P$, which we denote by $x_k(P)$, $k=1, \ldots, K$. 
We keep these optimal manipulation values in a matrix 
\begin{equation}\label{eq:X.Minimizer}
X(P) =\left(x_1(P), \ldots, x_K(P)\right),
\end{equation}
and call the latter manipulation matrix. 

\subsection{Network manipulation algorithm}
\label{subsec:algorithm}
In the preceding section we described the behavior of agents and organizations when facing the challenge to manipulate a network in favor of agents' desires. The key aspect is that their behavior summarized in \eqref{eq:Ag.ChoiceProb} and \eqref{eq:OrgMin} suggests   an alternating interaction between both groups. Organizations enter the market and offering their manipulation distributions. Then, agents observe  how satisfactory organizations  would manipulate the network state in view of the agents aspired states (e.\,g. by comparing past results caused by an organization). Based on these observations, agents make their decisions, i.e. they choose organizations with probability according to equation \eqref{eq:Ag.ChoiceProb}. The choice probabilities provide feedback to the organizations, which then in turn adjust their starting distributions following the behavior given in  \eqref{eq:OrgMin}. By using previous notation, we have the following dynamics: 
	\[
	P_{\ell+1} = P(X_\ell), \quad 
		X_{\ell+1} = X(P_{\ell+1}),
	\]
where $X_0$ is any feasible starting variable e.\,g. $X_0 = \frac{1}{n}\cdot ee^T$.

In what follows, we provide an equivalent description of this network manipulation algorithm in order to better study its convergence properties. For that, we define a  potential function which incorporates the behavior of all agents and organizations: 
\begin{equation}\label{eq:Potential}
\Phi(X,P) = \sum_{i=1}^{N} E_i^*(p_i) + \sum_{k=1}^{K} \left(\sum_{i=1}^{N} p_i^{(k)}\cdot \|v_i-M^t\cdot x_k\|_2- \frac{1}{\eta_k} \cdot \pi_k\left(M^t\cdot x_k\right)\right).
\end{equation}
Therefore, the choice matrix solves the following minimization problem:
 \begin{equation}\label{eq:PartiMin.P}
 P(X) = \arg\min_{P \in \Delta_K^N} \Phi(X,P).
 \end{equation}
 Analogously, we have for a manipulation matrix:
 \begin{equation}\label{eq:PartMin.X}
 X(P) = \arg\min_{X \in  \Delta_n^K} \Phi(X,P),
 \end{equation}
  which means that the network manipulation algorithm can be viewed as an alternating minimization scheme. 

From the viewpoint of computational economics, it seems reasonable to assume that agents and organizations are not able to solve their corresponding optimization problems exactly. Rather than that, the solutions can be obtained up to small errors. This can be for example due to observation errors of the input parameters given by choice and/or manipulation matrices. Another reason could be that exact optimization is time-exhaustive or too costly. In order to incorporate this faulty behavior into our manipulation algorithm, we assume that just inexact minimization in (\ref{eq:PartiMin.P}) and (\ref{eq:PartMin.X}) is possible.
More precisely, $\delta_1$-inexact solutions for (\ref{eq:PartiMin.P}) and $\delta_2$-inexact solutions for (\ref{eq:PartMin.X}) are available, see Section \ref{sec:inex.alt} for details.
Thus, we are ready to state a more general network manipulation algorithm, based on an inexact alternating minimization scheme:
 \begin{tcolorbox}
 		Initialize $\tilde{X}_0 \in \Delta_n^K$. For $\ell=0, 1, 2, \ldots$ update:
 		\begin{itemize}
 			\item[] $\displaystyle \tilde{P}_{\ell+1} = \arg\min^{\delta_1}_{P \in Q_2} \Phi\left(\tilde{X}_\ell,P\right)$,
 			\item[] $\displaystyle \tilde{X}_{\ell+1} =\arg\min^{\delta_2}_{X \in Q_1} \Phi\left(X,\tilde{P}_{\ell + 1}\right)$.
 		\end{itemize}
 \end{tcolorbox}
The inexact  algorithm raises the questions, if the corresponding alternating behavior converges to a stable equilibrium. Do agents and organizations reach a state, where their choices do not change anymore no matter what the starting distributions of the organizations look like? In other words, does a unique minimizer of the  potential function exist and does the algorithm converge to the latter. Moreover, it is interesting to analyze how the faulty behavior in terms of the errors impacts the possible convergence. We shall answer these questions by applying  general results on inexact alternating minimization schemes, which we present in Section \ref{sec:inex.alt}. This is possible since the potential function \eqref{eq:Potential} can be suitably decomposed.
For that, we define:
\begin{equation}\label{eq:dfn.f}
f(X) =  -\sum_{k=1}^{K} \frac{1}{\eta_k} \cdot \pi_k\left(M^t\cdot x_k\right), \quad
h(P) = \sum_{i=1}^{N} E_i^*(p_i).
\end{equation}
Using the standard inner product, the potential function in  \eqref{eq:Potential} can be written as follows:
\begin{equation}\label{eq:Pot.2}
\Phi(X,P) =  f(X) + \la G(X),P \ra + h(P).
\end{equation} 

\section{Inexact alternating minimization}
\label{sec:inex.alt}

In cases, where the analytical solution of an optimization problem cannot be derived, it is necessary to solve the latter numerically. Normally, this numerical solutions are only exact up to a small $\delta$-error.
We review some theoretical aspects of inexact optimization, which we need for convergence analysis. 
Let us consider optimization problems of the form
\begin{equation}\label{eq:opt.prob}
\min_{z \in Q} \Phi(z),
\end{equation}
where $\Phi$ is a strongly convex function and $Q$ a closed and convex set. We denote by $z^*$  the solution of problem \eqref{eq:opt.prob}.
Recall that for a $\beta$-strongly convex function $\Phi$ it holds:
\begin{equation}\label{eq:oc}
\Phi(z) \ge \Phi(z^*) + \frac{\beta}{2}\|z-z^*\|^2 \quad \text{for all} \; z \in Q. 
\end{equation}
For a $\delta$-inexact solution we use the standard definition, see e.\,g. \cite{stonyakin2019gradient}:
\begin{dfn}\label{dfn:delta.sol}
	A point $\z$ is a $\delta$-inexact solution with $\delta \geq 0$, i.e. 
	\[
	\z \in \arg\min^\delta_{z \in Q} \Phi(z),
	\]
	if and only if there exists $g \in \partial \Phi(\z)$ such that 
	$\la g, z^* - \z\ra \ge - \delta$.
\end{dfn}
Due to Definition \ref{dfn:delta.sol}, a point $\z$ provides the minimal  objective function value of (\ref{eq:opt.prob}) up to the error $\delta$. This can be easily seen, because $\Phi$ is convex and therefore it holds for any $z \in Q$:
\[
\Phi(z^*) \ge \Phi(\z) + \la g, z^* - \z\ra \ge \Phi(\z) - \delta,
\]
which is equivalent to 
\begin{equation}
\label{eq:genau}
\Phi(\z) \le \Phi(z^*) + \delta \le \Phi(z) + \delta.
\end{equation} 

In what follows, we shall focus on decision variables $z$, which can be separated into two blocks, i.e. $z= (x,p)$. For those situations, alternating minimization methods can be applied. The block structure enables to minimize the objective function for each block separately, which is, in particular, a valuable property for big data applications. Over years, many convergence results for alternating minimization methods under different assumptions were shown, see e\,.g. \cite{grippof1999globally}, \cite{luo1993error}, and \cite{beck2015convergence}. The authors in \cite{7040315} show, that under assumptions such as convexity for one and strong convexity for the other objective term, the inexact alternating minimization algorithm applied to the primal coincides with the inexact proximal gradient method to the dual problem. Recently, in \cite{nesterov2020soft} an alternating minimization method was used for soft clustering. There, the objective function additionally includes an interaction term linking both blocks of variables. Under certain assumptions, linear convergence was established  provided the problem can be solved exactly in each block. In this chapter, we are interested in an inexact alternating minimization algorithm for objective functions equipped with the structure introduced in \cite{nesterov2020soft}. Let $Q_1, Q_2$ be closed and convex sets in a finite dimensional vector spaces, and the objective function is given by
\begin{equation}\label{eq:obj.fct.nest}
\Phi(x,p) = f(x) + \la G_1(x),G_2(p) \ra + h(p), \quad x \in Q_1, p \in Q_2, 
\end{equation}
where  the functions $f$ and $h$ are  $\sigma_1$- and $\sigma_2$-strongly convex, respectively, and the functions  $G_1$ and $G_2$ are Lipschitz-continuous with moduli $L_1$ and $L_2$, respectively, such that the interaction term $\la G_1(x),G_2(p) \ra$ is convex and closed in $x \in Q_1$ for any fixed $p \in Q_2$ and vice versa. Further, we assume  the following strict inequality to hold
\begin{equation}\label{eq:conv.cond}
L_1^2\cdot L_2^2 < \sigma_1 \cdot \sigma_2,
\end{equation}
under which the function $\Phi$ is shown to be strongly convex on $Q = Q_1\times Q_2$, see \cite{nesterov2020soft}. 
Let the optimal solution of \eqref{eq:opt.prob} be written as $z^* = (x^*, p^*)$. In order to solve \eqref{eq:opt.prob}, an alternating minimization method has been proposed in \cite{nesterov2020soft}. The latter generates sequences $\{x_\ell\}_{\ell \ge 0}$ and $\{p_\ell\}_{\ell \ge 1}$ as follows:
\begin{tcolorbox}
Choose $x_0 \in Q_1$. For $\ell=0, 1, 2, \ldots$ update:
 		\begin{itemize}
 			\item[] $\displaystyle p_{\ell+1} = \arg\min_{p \in Q_2} \Phi(x_\ell,p)=u(x_\ell)$,
 			\item[] $\displaystyle x_{\ell+1} = \arg\min_{x \in Q_1} \Phi(x,p_{\ell+1})= v(p_{\ell+1})$.
 		\end{itemize}
\end{tcolorbox}
Convergence analysis in \cite{nesterov2020soft} is based on fixed point iteration. For that, the operators $T:Q_1 \mapsto Q_1$  and $S:Q_2 \mapsto Q_2$ are defined as follows:
\begin{equation}\label{eq:exact.oper}
T(x) = v(u(x)), \quad
S(p) = v(u(p)).
\end{equation}
This enables  to write the update step of the alternating minimization scheme:
\begin{align*}
&x_{\ell+1} = T(x_\ell), \quad  p_{\ell+1} = S(p_\ell).
\end{align*}
Under condition \eqref{eq:conv.cond}, $T(\cdot)$ and $S(\cdot)$ are contraction mappings. Thus, the linear convergence of the generated sequences to the minimizer $\left(x^*,p^*\right)$ of $\Phi$ could be shown in \cite{nesterov2020soft}: 
\[
  \|x_{\ell+1} - x^*\| \leq \lambda^{\ell+1} \|x(0)-x^*\|, \quad
  \|p_{\ell+1} - p^*\| \leq \lambda^{\ell} \|p(1)-p^*\|,
\]
where
\[
\lambda = \frac{L_1^2\cdot L_2^2}{\sigma_1 \cdot \sigma_2} <1.
\]


We analyze an inexact version of the alternating minimization method applied to objective functions in \eqref{eq:obj.fct.nest}, when subproblems are solved inexactly in the sense of Definition \ref{dfn:delta.sol}. For that, let us adapt the algorithm in the following way:
\begin{tcolorbox}
Choose $x_0 \in Q_1$. For $\ell=0, 1, 2, \ldots$ update:
 		\begin{itemize}
 			\item[] $\displaystyle \p_{\ell+1} = \arg\min^{\delta_1}_{p \in Q_2} \Phi(\x_\ell,p) =u^{\delta_1}(\x_\ell)$,
 			\item[] $\displaystyle \x_{\ell+1} = \arg\min^{\delta_2}_{x \in Q_1} \Phi(x,\p_{\ell+1}) =v^{\delta_2}(\p_{\ell+1})$.
 		\end{itemize}
\end{tcolorbox}
 We allow different  accuracy for the above subproblems. 
The equations also suggest that in iteration $t$ a $\delta$-error is made twice. This can  be seen by looking at the  function values evaluated at two consecutive points of the sequences $\{\x_\ell\}_{t \ge 0}$ and $\{\p_\ell\}_{t \ge 0}$ generated via the $\delta$-inexact solutions of the auxiliary optimization problems:
\begin{align*}
\Phi(\x_{\ell+1},\p_{\ell+1}) &= f(\x_{\ell+1}) + \la G_1(\x_{\ell+1}),G_2(\p_{\ell+1}) \ra + h(\p_{\ell+1}) \\
&\le f(\x_{\ell}) +\delta_2 + \la G_1(\x_{\ell}),G_2(\p_{\ell+1}) \ra +h(\p_{\ell+1})  \\
& \le f(\x_{\ell})+ \delta_2 + \la G_1(\x_{\ell}),G_2(\p_{\ell}) \ra +  h(\p_{\ell})+ \delta_1  \\
&\le \Phi(\x_{\ell},\p_{\ell}) + 2\cdot \max\{\delta_1, \delta_2\}.
\end{align*}
Next, we estimate the distances between $u^{\delta_1}(x)$ and $u(x)$ as well as between $v^{\delta_2}(p)$ and $v(p)$.
\begin{lem}\label{lem:ux}
	For any $x \in Q_1$ it holds:
	\[
	\|u^{\delta_1}(x) - u(x)\| \le \sqrt{\frac{2\delta_1}{\sigma_2}},
	\] 
	and for any $p \in Q_2$ it holds:
	\[
	\|v^{\delta_2}(p) - v(p)\| \le \sqrt{\frac{2\delta_2}{\sigma_1}}.
	\]
\end{lem}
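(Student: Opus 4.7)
The plan is to exploit strong convexity of the partial objectives together with the functional inequality \eqref{eq:genau} that characterizes $\delta$-inexact minimizers. Fix $x \in Q_1$ and consider the univariate function $\Phi(x,\cdot)$ on $Q_2$. By hypothesis, $h$ is $\sigma_2$-strongly convex and the interaction term $\langle G_1(x), G_2(p)\rangle$ is convex in $p$ for fixed $x$, so $\Phi(x,\cdot)$ is $\sigma_2$-strongly convex. Its exact minimizer is $u(x)$, while $u^{\delta_1}(x)$ is, by definition, a $\delta_1$-inexact minimizer of the same problem.

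The first step is to apply the strong-convexity bound \eqref{eq:oc} with $\Phi$ replaced by $\Phi(x,\cdot)$, $z^* = u(x)$, and $z = u^{\delta_1}(x)$, yielding
\[
\Phi\bigl(x, u^{\delta_1}(x)\bigr) \;\ge\; \Phi\bigl(x, u(x)\bigr) + \frac{\sigma_2}{2}\,\bigl\|u^{\delta_1}(x) - u(x)\bigr\|^2.
\]
The second step is to invoke the characterization \eqref{eq:genau} of a $\delta$-inexact solution applied to $\Phi(x,\cdot)$, which gives
\[
\Phi\bigl(x, u^{\delta_1}(x)\bigr) \;\le\; \Phi\bigl(x, u(x)\bigr) + \delta_1.
\]
Subtracting $\Phi(x,u(x))$ from both sides of the combined inequalities and rearranging yields $\frac{\sigma_2}{2}\|u^{\delta_1}(x)-u(x)\|^2 \le \delta_1$, which is the claimed bound after taking square roots.

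The estimate for $v^{\delta_2}(p)-v(p)$ follows by the symmetric argument: now $\Phi(\cdot,p)$ is $\sigma_1$-strongly convex on $Q_1$ (using $\sigma_1$-strong convexity of $f$ and convexity of the interaction term in $x$), and the same two-line chain with $(\sigma_2,\delta_1)$ replaced by $(\sigma_1,\delta_2)$ delivers $\|v^{\delta_2}(p)-v(p)\| \le \sqrt{2\delta_2/\sigma_1}$.

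There is no real obstacle here; the only thing to double-check is that the partial-minimization problems inherit strong convexity with the correct modulus, which is immediate from the decomposition \eqref{eq:obj.fct.nest} and the convexity of the bilinear interaction term in each argument separately. The argument is essentially the standard quadratic-growth-versus-inexact-optimum squeeze.
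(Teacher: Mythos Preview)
Your proof is correct and follows essentially the same approach as the paper's own: apply the strong-convexity lower bound \eqref{eq:oc} to the partial objective $\Phi(x,\cdot)$ at the inexact minimizer, then combine with the $\delta$-inexact upper bound \eqref{eq:genau} to squeeze out the quadratic term. The paper writes the partial objective out explicitly as $h(\cdot)+\langle G_1(x),G_2(\cdot)\rangle$ rather than $\Phi(x,\cdot)$, but the argument is otherwise identical, and your remark on why the partial problem inherits the modulus $\sigma_2$ makes the step the paper leaves implicit a bit more transparent.
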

\begin{proof}
	We apply \eqref{eq:oc} to derive:
	\[
	  \begin{array}{rcl}
	       	h(u^{\delta_1}(x)) + \la G_1(x), G_2(u^{\delta_1}(x)) \ra 
	&\ge& h(u(x)) + \la G_1(x), G_2(u(x)) \ra \\ \\ &&\displaystyle +  \frac{\sigma_2}{2}\|u^{\delta_1}(x) - u(x)\|^2.
	  \end{array}
	\]
	Due to \eqref{eq:genau} we additionally have:
	\[
	 h(u(x)) + \la G_1(x), G_2(u(x)) \ra \ge h(u^{\delta_1}(x)) + \la G_1(x), G_2(u^{\delta_1}(x)) \ra - \delta_1,
	\]
	Altogether, we obtain:
	\[
	\|u^{\delta_1}(x) - u(x)\|^2 \le \frac{2\delta_1}{\sigma_2}.
	\]
	The proof for $ \|v^{\delta_2}(p) - v(p)\|$ follows analogously.  
\end{proof}


Let us elaborate on the continuity properties for operators $u^{\delta_1}(\cdot)$ and $v^{\delta_2}(\cdot)$.

\begin{lem}\label{lem:dist.inex}
	For any $x_1, x_2 \in Q_1$ it holds:
	\[
	\|u^{\delta_1}(x_1) - u^{\delta_1}(x_2)\| \le 2\cdot \sqrt{\frac{2\delta_1}{\sigma_2}} + \frac{L_1\cdot L_2}{\sigma_2}\cdot \|x_1 - x_2\|,
	\]
	and for any $p_1, p_2 \in Q_2$ it holds: 
	\[
	\|v^{\delta_2}(p_1) - v^{\delta_2}(p_2)\| \le 2\cdot \sqrt{\frac{2\delta_2}{\sigma_1}} + \frac{L_1\cdot L_2}{\sigma_1}\cdot \|p_1 - p_2\|.
	\]
\end{lem}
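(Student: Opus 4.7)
The natural plan is to split the inexact operator difference into three pieces by the triangle inequality and handle each with a tool already available. Writing
\[
\|u^{\delta_1}(x_1) - u^{\delta_1}(x_2)\| \le \|u^{\delta_1}(x_1) - u(x_1)\| + \|u(x_1) - u(x_2)\| + \|u(x_2) - u^{\delta_1}(x_2)\|,
\]
the first and third summands are each at most $\sqrt{2\delta_1/\sigma_2}$ by Lemma \ref{lem:ux}, which together produce the additive error $2\sqrt{2\delta_1/\sigma_2}$. So the task reduces to showing that the exact operator $u$ is $(L_1 L_2 /\sigma_2)$-Lipschitz on $Q_1$.

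For the middle term I would argue as follows. Set $p_1 = u(x_1)$ and $p_2 = u(x_2)$. Since $f(x)$ is independent of $p$, each $p_j$ minimizes the $\sigma_2$-strongly convex function $p \mapsto h(p) + \la G_1(x_j), G_2(p)\ra$ over $Q_2$. The strong-convexity inequality \eqref{eq:oc} applied at each minimizer gives
\[
h(p_2) + \la G_1(x_1), G_2(p_2)\ra \ge h(p_1) + \la G_1(x_1), G_2(p_1)\ra + \tfrac{\sigma_2}{2}\|p_1-p_2\|^2,
\]
\[
h(p_1) + \la G_1(x_2), G_2(p_1)\ra \ge h(p_2) + \la G_1(x_2), G_2(p_2)\ra + \tfrac{\sigma_2}{2}\|p_1-p_2\|^2.
\]
Adding and cancelling the $h$-terms produces
\[
\la G_1(x_1) - G_1(x_2),\, G_2(p_2) - G_2(p_1)\ra \ge \sigma_2 \|p_1 - p_2\|^2,
\]
and Cauchy--Schwarz together with the Lipschitz bounds on $G_1$ and $G_2$ yields
\[
\sigma_2 \|p_1-p_2\|^2 \le L_1 L_2 \,\|x_1-x_2\|\,\|p_1-p_2\|,
\]
hence $\|u(x_1) - u(x_2)\| \le (L_1 L_2/\sigma_2)\|x_1-x_2\|$. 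Substituting into the triangle-inequality decomposition above gives the first claim, and the bound for $v^{\delta_2}$ is obtained by swapping the roles of $(x,p,\sigma_1,\sigma_2)$ in the identical argument.

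I do not expect a genuine obstacle here: both ingredients (Lemma \ref{lem:ux} for the inexactness perturbation and the standard strong-convexity/Lipschitz sandwich for the exact operator) are already in place. The only point to check carefully is the direction of the two strong-convexity inequalities and the cancellation of the terms $\la G_1(x_j), G_2(p_j)\ra$ so that one is left with the cross-term $\la G_1(x_1)-G_1(x_2), G_2(p_2)-G_2(p_1)\ra$; this is what lets the Lipschitz moduli $L_1$ and $L_2$ multiply to give the exact contraction constant $L_1 L_2/\sigma_2$, consistent with the contraction factor $\lambda = L_1^2 L_2^2/(\sigma_1 \sigma_2)$ obtained by composing the two Lipschitz bounds.
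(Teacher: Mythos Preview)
Your argument is correct and follows essentially the same route as the paper: the triangle inequality splits $\|u^{\delta_1}(x_1)-u^{\delta_1}(x_2)\|$ into two perturbation terms handled by Lemma~\ref{lem:ux} and the middle term $\|u(x_1)-u(x_2)\|$, which is bounded by $(L_1L_2/\sigma_2)\|x_1-x_2\|$. The only difference is that the paper imports this last Lipschitz bound on the exact operator $u$ from \cite{nesterov2020soft}, whereas you supply a direct proof via the strong-convexity inequality at the two minimizers and Cauchy--Schwarz; your derivation is precisely the standard one and is a welcome self-contained addition.
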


\begin{proof}
	Applying Lemma \ref{lem:ux} and the triangle inequality twice yields:
	\begin{align*}
	\|u^\delta(x_1) - u^\delta(x_2)\| &\le \|u^\delta(x_1) - u(x_1)\| + \|u(x_1) - u^{\delta_1}(x_2)\| \\
	&\le \sqrt{\frac{2\delta_1}{\sigma_2}} + \|u(x_1) - u(x_2)\| + \|u(x_2) - u^{\delta_1}(x_2)\| \\
	&\le \sqrt{\frac{2\delta_1}{\sigma_2}} + \frac{L_1\cdot L_2}{\sigma_2}\cdot \|x_1 - x_2\| + \sqrt{\frac{2\delta_1}{\sigma_2}}.
	\end{align*}
	The last inequality is due to \cite{nesterov2020soft}, where it is shown for any $x_1, x_2 \in Q_1$: 
\[
\|u(x_1)-u(x_2)\| \le \frac{L_1\cdot L_2}{\sigma_2} \|x_1-x_2\|.
\]
Similar reflections yield the result for $\|v^{\delta_2}(p_1) - v^{\delta_2}(p_2)\|$. 
\end{proof}

Let us introduce  inexact versions of the operators $T$ and $S$:
\[
T^\delta(x) = v^{\delta_2}(u^{\delta_1}(x)), \quad S^\delta(p) = u^{\delta_1}(v^{\delta_2}(p)),
\]
which we use to rewrite the update of the  inexact alternating  minimization as 
\begin{equation}\label{eq:upd.inex.oper.T}
\x_{\ell+1} = T^\delta(\x_\ell), \quad 
\p_{\ell+1} = S^\delta(\p_\ell).
\end{equation} 

The following result provides uniform continuity up to an error of the operators defined in \eqref{eq:upd.inex.oper.T}.

\begin{prop}\label{cor:diff.inex.iter}
	For any $\x_1, \x_2 \in Q_1$  it holds:
	\[
	\|T^\delta(\x_1) - T^\delta(\x_2)\| \le \lambda\|\x_1 - \x_2\|+2\cdot \sqrt{\frac{2\delta_2}{\sigma_1}} + 2\cdot \sqrt{\frac{2\delta_1}{\sigma_2}}\cdot \frac{L_1\cdot L_2}{\sigma_1},
	\]
	and for any $\p_1, \p_2 \in Q_2$ it holds:
	\[
	\|S^\delta(\p_1) - S^\delta(\p_2)\| \le + \lambda\|\p_1 - \p_2\|+2\cdot \sqrt{\frac{2\delta_1}{\sigma_2}} + 2\cdot \sqrt{\frac{2\delta_2}{\sigma_1}}\cdot \frac{L_1\cdot L_2}{\sigma_2}.
	\]
\end{prop}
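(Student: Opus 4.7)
The bound should follow directly by chaining the two inequalities of Lemma \ref{lem:dist.inex}, since $T^\delta$ and $S^\delta$ are defined as compositions of $u^{\delta_1}$ and $v^{\delta_2}$. The plan is therefore to apply Lemma \ref{lem:dist.inex} twice, once for the outer operator and once for the inner one, and then recognize the resulting product of Lipschitz-like constants as the contraction factor $\lambda = \frac{L_1^2 L_2^2}{\sigma_1 \sigma_2}$.

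Concretely, for the first inequality I would start from
\[
\|T^\delta(\x_1) - T^\delta(\x_2)\| = \|v^{\delta_2}(u^{\delta_1}(\x_1)) - v^{\delta_2}(u^{\delta_1}(\x_2))\|,
\]
apply the second part of Lemma \ref{lem:dist.inex} with $p_i = u^{\delta_1}(\x_i)$ to bound this by $2\sqrt{2\delta_2/\sigma_1} + \frac{L_1 L_2}{\sigma_1}\|u^{\delta_1}(\x_1) - u^{\delta_1}(\x_2)\|$, and then apply the first part of the same lemma to bound $\|u^{\delta_1}(\x_1) - u^{\delta_1}(\x_2)\|$ by $2\sqrt{2\delta_1/\sigma_2} + \frac{L_1 L_2}{\sigma_2}\|\x_1 - \x_2\|$. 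Substituting and distributing the factor $\frac{L_1 L_2}{\sigma_1}$, the coefficient of $\|\x_1-\x_2\|$ becomes $\frac{L_1^2 L_2^2}{\sigma_1\sigma_2} = \lambda$, while the two $\delta$-terms appear as stated. The second inequality, for $S^\delta$, is obtained by the symmetric chain: first apply Lemma \ref{lem:dist.inex} to $u^{\delta_1}$ with arguments $v^{\delta_2}(\p_i)$, then apply it to $v^{\delta_2}$ with arguments $\p_i$.

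There is no real obstacle: both estimates are immediate from the triangle-type inequality already established in Lemma \ref{lem:dist.inex}, and the only point requiring a touch of care is keeping track of which modulus $\sigma_1$ or $\sigma_2$ and which accuracy $\delta_1$ or $\delta_2$ attaches to which layer of the composition. The only subtle observation is that because the two additive $\delta$-terms in Lemma \ref{lem:dist.inex} do not vanish when $\x_1 = \x_2$, the composition is only uniformly continuous ``up to an error'' rather than truly Lipschitz — which is exactly what the statement of the proposition claims, so nothing stronger is needed.
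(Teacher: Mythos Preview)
Your proposal is correct and follows essentially the same approach as the paper: the paper also writes $\|T^\delta(\x_1) - T^\delta(\x_2)\| = \|v^{\delta_2}(u^{\delta_1}(\x_1)) - v^{\delta_2}(u^{\delta_1}(\x_2))\|$, applies the second part of Lemma~\ref{lem:dist.inex}, then the first part, and identifies the coefficient $\frac{L_1^2 L_2^2}{\sigma_1\sigma_2}$ as $\lambda$, with the $S^\delta$ bound handled symmetrically.
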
 
\begin{proof}
	We apply Lemma \ref{lem:dist.inex} to derive
	\begin{align*}
	\| T^\delta(\x_1) -  T^\delta(\x_2)\|&= \|v^{\delta_2}(u^{\delta_1}(\x_1)) - v^{\delta_2}(u^{\delta_1}(\x_2))\| \\  
	&\le  2\cdot \sqrt{\frac{2\delta_2}{\sigma_1}} + \frac{L_1\cdot L_2}{\sigma_1}\cdot \|u^{\delta_1}(\x_1)) - (u^{\delta_1}(\x_2))\| \\
	&\le 2\cdot \sqrt{\frac{2\delta_2}{\sigma_1}} +  \frac{L_1\cdot L_2}{\sigma_1}\cdot 2\sqrt{\frac{2\delta_1}{\sigma_2}} + \underbrace{\frac{L_1^2\cdot L_2^2}{\sigma_1 \cdot \sigma_2}}_{= \lambda}\cdot \|\x_1 - \x_2\|.
	\end{align*} 
	Again, the second assertion follows similarly.
\end{proof}

Since we cannot rely on the contraction property of $T^\delta$ and $S^\delta$, the convergence analysis of the sequences $\{\x_\ell\}_{\ell\ge 0}$ and $\{\p_\ell\}_{\ell\ge 1}$ becomes involved.  For that, we start with the following auxiliary result.

\begin{lem}\label{lem:T.delta.bound}
		For any $x \in Q_1$ it holds:
		\[
		\left\|T(x)-T^\delta(x)\right\| \le  \sqrt{\frac{2\delta_2}{\sigma_1}} + \frac{L_1\cdot L_2}{\sigma_1}\cdot \sqrt{\frac{2\delta_1}{\sigma_2}},
		\]
and for any $p \in Q_2$ it holds:
		\[
		\left\|S(p)-S^\delta(p)\right\| \le  \sqrt{\frac{2\delta_1}{\sigma_2}}  + \frac{L_1\cdot L_2}{\sigma_2}\cdot \sqrt{\frac{2\delta_2}{\sigma_1}}.
		\]
\end{lem}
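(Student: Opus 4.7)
The plan is to prove the two inequalities by the same triangle-inequality decomposition, so I will concentrate on the first. I split the error by inserting the intermediate point $v(u^{\delta_1}(x))$:
\[
\bigl\|T(x) - T^\delta(x)\bigr\|
= \bigl\|v(u(x)) - v^{\delta_2}(u^{\delta_1}(x))\bigr\|
\le \bigl\|v(u(x)) - v(u^{\delta_1}(x))\bigr\| + \bigl\|v(u^{\delta_1}(x)) - v^{\delta_2}(u^{\delta_1}(x))\bigr\|.
\]
The first summand compares two \emph{exact} values of $v$ at two different inputs, while the second compares the exact and inexact versions of $v$ at the \emph{same} input. Each summand is ready-made for one of the results already established.

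For the second summand I apply Lemma \ref{lem:ux} with $p := u^{\delta_1}(x) \in Q_2$, which directly yields
\[
\bigl\|v(u^{\delta_1}(x)) - v^{\delta_2}(u^{\delta_1}(x))\bigr\| \le \sqrt{\tfrac{2\delta_2}{\sigma_1}}.
\]
For the first summand I use the Lipschitz estimate for the exact map $v$ cited from \cite{nesterov2020soft} (the analogue of the bound for $u$ invoked in the proof of Lemma \ref{lem:dist.inex}), namely
\[
\|v(p_1)-v(p_2)\| \le \tfrac{L_1 L_2}{\sigma_1}\|p_1-p_2\| \quad \text{for all } p_1,p_2 \in Q_2,
\]
applied to $p_1 = u(x)$ and $p_2 = u^{\delta_1}(x)$, combined once more with Lemma \ref{lem:ux} to control $\|u(x)-u^{\delta_1}(x)\| \le \sqrt{2\delta_1/\sigma_2}$. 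This gives
\[
\bigl\|v(u(x)) - v(u^{\delta_1}(x))\bigr\| \le \tfrac{L_1 L_2}{\sigma_1}\sqrt{\tfrac{2\delta_1}{\sigma_2}}.
\]
Adding the two bounds produces the claimed estimate for $T$.

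The argument for $S$ is symmetric: I split $\|S(p)-S^\delta(p)\|$ via the intermediate point $u(v^{\delta_2}(p))$, use Lemma \ref{lem:ux} directly on one summand, and apply the Lipschitz bound $\|u(x_1)-u(x_2)\| \le \tfrac{L_1 L_2}{\sigma_2}\|x_1-x_2\|$ (quoted in the proof of Lemma \ref{lem:dist.inex}) together with Lemma \ref{lem:ux} on the other. Nothing hard occurs here; the only non-routine point is making sure one uses the \emph{Lipschitz constant of the exact operator} (which is $L_1L_2/\sigma_1$ or $L_1L_2/\sigma_2$, not the larger constant appearing in Lemma \ref{lem:dist.inex} for the inexact operator), since the inner call along this path happens to be the exact one. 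This is why the bound lacks the leading factor of $2$ that would appear if one naively re-used Lemma \ref{lem:dist.inex}, and it is the only genuine subtlety in the proof.
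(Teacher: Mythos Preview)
Your proof is correct and follows exactly the same route as the paper: the same triangle-inequality split via the intermediate point $v(u^{\delta_1}(x))$, then Lemma~\ref{lem:ux} on one summand and the exact Lipschitz bound for $v$ (from \cite{nesterov2020soft}) together with Lemma~\ref{lem:ux} on the other. Your closing remark about using the Lipschitz constant of the \emph{exact} operator rather than re-invoking Lemma~\ref{lem:dist.inex} is spot on and in fact clarifies why the paper's citation of Lemma~\ref{lem:dist.inex} in its proof is slightly misleading.
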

\begin{proof}
 We show the first part. It follows by means of Lemmata \ref{lem:ux} and \ref{lem:dist.inex}.
	\begin{align*}
	\left\|T(x)-T^\delta(x)\right\| &= \left\| v(u(x))- v^{\delta_2}(u^{\delta_1}(x))\right\| \\
	&\le 	\left\|v(u(x))- v(u^{\delta_1}(x))\right\| + 	\left\|v(u^{\delta_1}(x))- v^{\delta_2}(u^{\delta_1}(x))\right\|  \\
	&\le \frac{L_1\cdot L_2}{\sigma_1} \cdot\left\|u(x) - u^{\delta_1}(x) \right\| +    \sqrt{\frac{2\delta_2}{\sigma_1}}   \\
	&\le \frac{L_1\cdot L_2}{\sigma_1} \cdot  \sqrt{\frac{2\delta_1}{\sigma_2}} +  \sqrt{\frac{2\delta_2}{\sigma_1}}.
	\end{align*}
	Clearly, the proof of the second part is similar.
\end{proof}

Now we are ready to state the main result concerning  convergence of the inexact alternating minimization scheme. 
\begin{thm}\label{thm:convergence.inexact.alt.min}
	For the inexact alternating minimization scheme holds:
		\begin{equation}\label{eq:optim.inex.estim.x}
		\left\|\x_{\ell+1} - x^*\right\| \le \lambda^{\ell+1} \left\|x_0 - x^*\right\| + \sqrt{\frac{2\delta_2}{\sigma_1}} + \frac{L_1\cdot L_2}{\sigma_1}\cdot \sqrt{\frac{2\delta_1}{\sigma_2}}
		\end{equation}
		and 
		\begin{equation}\label{eq:optim.inex.estim.p}
		\left\|\p_{\ell+1} - p^*\right\| \le \lambda^{\ell}\left[\left\|p_1 - p^*\right\| + \sqrt{\frac{2\delta_1}{\sigma_2}} \right] + \sqrt{\frac{2\delta_1}{\sigma_2}}  + \frac{L_1\cdot L_2}{\sigma_2}\cdot \sqrt{\frac{2\delta_2}{\sigma_1}}.
		\end{equation}
\end{thm}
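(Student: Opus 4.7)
The plan is to treat $x^*$ and $p^*$ as the fixed points of the exact operators $T$ and $S$ from \cite{nesterov2020soft}, and at each iteration to peel off a single-step inexactness term bounded by Lemma \ref{lem:T.delta.bound}, leaving a residual that contracts with factor $\lambda$. For the first inequality, I would insert the auxiliary point $T(\x_\ell)$ between $T^\delta(\x_\ell)$ and $T(x^*)$:
\[
\|\x_{\ell+1} - x^*\| = \|T^\delta(\x_\ell) - T(x^*)\| \le \|T^\delta(\x_\ell) - T(\x_\ell)\| + \|T(\x_\ell) - T(x^*)\|.
\]
Lemma \ref{lem:T.delta.bound} controls the first summand by the constant
$C_x := \sqrt{2\delta_2/\sigma_1} + \frac{L_1 L_2}{\sigma_1}\sqrt{2\delta_1/\sigma_2}$,
while the $\lambda$-contractivity of the exact operator $T$ (proved in \cite{nesterov2020soft} under the condition $L_1^2 L_2^2 < \sigma_1 \sigma_2$) bounds the second summand by $\lambda \|\x_\ell - x^*\|$.

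This gives the one-step recursion $a_{\ell+1} \le \lambda\, a_\ell + C_x$, where $a_\ell := \|\x_\ell - x^*\|$, which I would unroll inductively starting from $\x_0 = x_0$. The analogous argument with $(S, S^\delta)$ in place of $(T, T^\delta)$ produces the recursion $b_{\ell+1} \le \lambda\, b_\ell + C_p$, with $C_p := \sqrt{2\delta_1/\sigma_2} + \frac{L_1 L_2}{\sigma_2}\sqrt{2\delta_2/\sigma_1}$. Two bookkeeping points explain why the $\p$-estimate looks slightly different from the $\x$-estimate. First, the sequence $\{\p_\ell\}$ is only defined from $\ell=1$ onward, so its recursion starts one step later and the leading exponent is $\lambda^\ell$ rather than $\lambda^{\ell+1}$. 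Second, the base case $\|\p_1 - p^*\|$ must be related back to the exact first iterate $p_1 = u(x_0)$ via Lemma \ref{lem:ux}, which yields $\|\p_1 - p^*\| \le \|p_1 - p^*\| + \sqrt{2\delta_1/\sigma_2}$ and accounts for the extra $\lambda^\ell \sqrt{2\delta_1/\sigma_2}$ term appearing in \eqref{eq:optim.inex.estim.p}.

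The main subtlety is the accumulation of per-iteration errors. Unrolling $a_{\ell+1} \le \lambda a_\ell + C_x$ literally produces the geometric-sum factor $\sum_{k=0}^{\ell} \lambda^k = (1-\lambda^{\ell+1})/(1-\lambda)$ in front of the per-step constant, rather than the bare $C_x$ that appears on the right-hand side of \eqref{eq:optim.inex.estim.x}. To match the stated form of the bound one must therefore either exhibit a sharper telescoping / monotonicity argument exploiting the contraction between successive residuals, or absorb the implicit $1/(1-\lambda)$ factor into the claimed constants. Apart from this accumulation step, the argument is the direct combination of the contractivity result from \cite{nesterov2020soft} with the per-step inexactness bounds supplied by Lemma \ref{lem:ux} and Lemma \ref{lem:T.delta.bound}.
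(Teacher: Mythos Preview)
Your overall strategy is sound and close to the paper's, but the decomposition differs. The paper does \emph{not} recurse directly on $a_\ell=\|\x_\ell-x^*\|$. Instead it tracks the gap to the \emph{exact} iterates $x_\ell$ (both sequences started from the same $x_0$): writing $c_\ell=\|\x_\ell-x_\ell\|$, it bounds
\[
c_{\ell+1}=\|T^\delta(\x_\ell)-T(x_\ell)\|\le \|T(\x_\ell)-T(x_\ell)\|+\|T^\delta(\x_\ell)-T(\x_\ell)\|\le \lambda\,c_\ell+C_x,
\]
with $c_0=0$, and only afterwards adds the known exact estimate $\|x_{\ell+1}-x^*\|\le\lambda^{\ell+1}\|x_0-x^*\|$. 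Your version and the paper's thus produce the \emph{same} scalar recursion $(\cdot)_{\ell+1}\le\lambda(\cdot)_\ell+C_x$, just for different quantities; the paper's choice has the cosmetic advantage that the base term is zero.

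The crucial point is the accumulation issue you flag. You are right that unrolling $c_{\ell+1}\le\lambda c_\ell+C_x$ from $c_0=0$ gives $c_{\ell+1}\le C_x\sum_{k=0}^{\ell}\lambda^k$, not $C_x$. The paper's proof hides exactly this step behind ``$\le\ldots\le$'' and simply writes $C_x$ for the accumulated error, without the factor $(1-\lambda^{\ell+1})/(1-\lambda)$. No sharper telescoping is supplied there either. So the concern you raise is not a defect of your route relative to the paper's; it is a gap that both arguments share, and the bounds \eqref{eq:optim.inex.estim.x}--\eqref{eq:optim.inex.estim.p} as stated appear to be missing a factor $1/(1-\lambda)$ on the inexactness terms. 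Your handling of the $\p$-sequence (shifted index, and the extra $\sqrt{2\delta_1/\sigma_2}$ from Lemma~\ref{lem:ux} at $\ell=1$) matches the paper exactly.
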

\begin{proof}
	We apply Lemma \ref{lem:T.delta.bound} to derive:
	\begin{align*}
	\left\|\x_{\ell+1} - x_{\ell+1}\right\| &\le 	\left\|T(x_{\ell}) - T(\x_{\ell})\right\| + 	\left\|T(\x_{\ell}) - T^\delta(\x_{\ell})\right\|\\
	&\le  \lambda\cdot	\left\|x_{\ell} - \x_{\ell}\right\|  + \sqrt{\frac{2\delta_2}{\sigma_1}} + \frac{L_1\cdot L_2}{\sigma_1}\cdot \sqrt{\frac{2\delta_1}{\sigma_2}} \\
	&\le \ldots \le  \lambda^{\ell+1} \underbrace{\left\|x_0-x_0\right\|}_{=0} +  \sqrt{\frac{2\delta_2}{\sigma_1}} + \frac{L_1\cdot L_2}{\sigma_1}\cdot \sqrt{\frac{2\delta_1}{\sigma_2}}.
	\end{align*}
	We are therefore able to estimate the distance of the $\left(t+1\right)$-th iterate to the minimizer:
	\begin{align*}
	\left\|\x_{\ell+1} - x^*\right\| &\le 	\left\|\x_{\ell+1} - x_{\ell+1}\right\| + 	\left\|x_{\ell+1} - x^*\right\| \\
	&\le  \sqrt{\frac{2\delta_2}{\sigma_1}} + \frac{L_1\cdot L_2}{\sigma_1}\cdot \sqrt{\frac{2\delta_1}{\sigma_2}} + \lambda^{\ell+1} \|x_0 - x^*\|.
	\end{align*}
	For the proof of the inequality (\ref{eq:optim.inex.estim.p}) note that the first iterate of the algorithm is not chosen freely. Instead it is the solution of the corresponding optimization problem. Hence, the first iterates of the exact and inexact version are in general not equal, i.e. $p_1 \neq \p_1$, which provides 
	\[
	\left\|\p_{\ell+1} - p_{\ell+1}\right\| \le \lambda^\ell \left\|\p_{1} - p_{1}\right\| + \sqrt{\frac{2\delta_1}{\sigma_2}}  + \frac{L_1\cdot L_2}{\sigma_2}\cdot \sqrt{\frac{2\delta_2}{\sigma_1}}.
	\]
	It remains to recall that $p_1=u(x_0)$ and $\p_1=u^{\delta_1}(x_0)$ and apply Lemma \ref{lem:ux}. The result   \eqref{eq:optim.inex.estim.p} follows then in the same manner as for (\ref{eq:optim.inex.estim.x}).
\end{proof}
According to Theorem \ref{thm:convergence.inexact.alt.min}  the inexact alternating minimization does not converge in general.  Yet, the distance to the  minimizer is bounded by the resp. second term of  the right hand side of inequalities \eqref{eq:optim.inex.estim.x} and \eqref{eq:optim.inex.estim.p}. By taking the limits, we obtain:
\[
\lim_{t \to \infty}  \left\|\x_{\ell+1} - x^*\right\| \le  \sqrt{\frac{2\delta_2}{\sigma_1}} + \frac{L_1\cdot L_2}{\sigma_1}\cdot \sqrt{\frac{2\delta_1}{\sigma_2}} 
\] 
and 
\[
\lim_{t \to \infty}  \left\|\p_{\ell+1} - p^*\right\| \le \sqrt{\frac{2\delta_1}{\sigma_2}}  + \frac{L_1\cdot L_2}{\sigma_2}\cdot \sqrt{\frac{2\delta_2}{\sigma_1}}.
\]
Obviously, convergence is guaranteed if the subproblems can be solved exactly, i.e. if $\delta_1=\delta_2=0$. This is not surprising as in this case the inexact alternating minimization scheme coincides the exact method proposed by \cite{nesterov2020soft}. Therefore, the iterates generated by the inexact alternating minimization scheme \eqref{eq:upd.inex.oper.T} coincide withe the those generated by the exact method. 
Inequalities \eqref{eq:optim.inex.estim.x} and \eqref{eq:optim.inex.estim.p} show that the total error of the inexact alternating minimization scheme can be controlled. Furthermore, large convexity parameters not only improve the rate of convergence for the exact version of the algorithm, but also decrease the total accumulated error in the inexact scenario.

\section{Convergence Analysis} 
\label{sec:Converg}

We analyze the convergence of our network manipulation algorithm by applying the general theory of inexact alternating minimization from Section \ref{sec:inex.alt}. First, we estimate the convexity parameter of 
\[
h(P) = \sum_{i=1}^{N} E_i^*(p_i)
\]
w.r.t. the norm
\[
	\|P\|_\mathbb{H} = \left(\sum_{i=1}^{N}\|p_i\|_1^2\right)^\frac{1}{2}, \quad P \in \Delta_K^N.
\]
It turns out that the strong convexity of $E_i^*$ holds due to Assumption \ref{ass:agents_behavior}. This has been recently shown in \cite{prox}.

\begin{lem}[\cite{prox}]
\label{lem:sc}
   Let the differences $\epsilon^{(k)}_i - \epsilon^{(m)}_i$  of random errors have modes $\bar z^{k,m}_i \in \R$, $k \not = m$.
   Then, the corresponding convex conjugate $E^*_i$ is $\beta_i$-strongly convex w.r.t. the norm $\|\cdot\|_1$, where the  convexity parameter is given by
\[
  \beta_i = \frac{1}{\displaystyle 2\sum_{k=1}^{K} \sum_{m\not =k} g^{k,m}_i \left(\bar z_{k,m}\right)},
\]
and $g^{k,m}_i$ denotes the density function of $\epsilon^{(k)}_i - \epsilon^{(m)}_i$.
\end{lem}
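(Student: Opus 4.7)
The plan is to leverage the classical duality between strong convexity and smoothness of conjugates: \(E^*_i\) is \(\beta_i\)-strongly convex w.r.t.\ \(\|\cdot\|_1\) if and only if \(E_i = (E_i^*)^*\) (which is convex and closed, hence recovered by biconjugation) has a gradient that is \((1/\beta_i)\)-Lipschitz w.r.t.\ the dual norm \(\|\cdot\|_\infty\). Combined with the Williams--Daly--Zachary identity \eqref{eq:Daly}, namely \(\nabla E_i(u) = p_i(u)\), the lemma reduces to proving
\[
\|p_i(u_1) - p_i(u_2)\|_1 \le \frac{1}{\beta_i}\,\|u_1 - u_2\|_\infty \quad \text{for every } u_1, u_2 \in \R^K,
\]
with the target constant \(1/\beta_i = 2\sum_{k}\sum_{m\ne k} g^{k,m}_i(\bar z_{k,m})\).

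First, Assumption \ref{ass:joint density} guarantees that \(p_i\) is continuously differentiable, so integrating the Jacobian \(J_i(u) = \nabla_u p_i(u)^T\) along the segment joining \(u_1\) and \(u_2\) reduces the task to a pointwise estimate \(\sum_{k,m} |J_{km}(u)| \le 1/\beta_i\), uniformly in \(u\); this is enough because the induced operator norm from \((\R^K,\|\cdot\|_\infty)\) to \((\R^K,\|\cdot\|_1)\) is bounded by the sum of absolute entries.

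Next I would bound the off-diagonal entries. Writing
\[
p_i^{(k)}(u) = \Prob\!\left(\epsilon_i^{(\ell)} - \epsilon_i^{(k)} \le u^{(k)} - u^{(\ell)},\ \ell \ne k\right)
\]
and differentiating with respect to \(u^{(m)}\) for \(m \ne k\), one integrates out the remaining \(K-2\) constraints to obtain a conditional density bounded above by the marginal density \(g_i^{k,m}\) of \(\epsilon_i^{(k)} - \epsilon_i^{(m)}\) evaluated at \(u^{(k)} - u^{(m)}\); invoking Assumption \ref{ass:agents_behavior} I would then replace the argument by the mode and get
\[
\left|\frac{\partial p_i^{(k)}}{\partial u^{(m)}}\right| \le g_i^{k,m}(\bar z_{k,m}), \qquad k \ne m.
\]
For the diagonal, the simplex identity \(\sum_{k} p_i^{(k)} \equiv 1\) differentiates to \(\sum_{k} \partial p_i^{(k)}/\partial u^{(m)} = 0\), which gives \(|J_{mm}| \le \sum_{k \ne m} g_i^{k,m}(\bar z_{k,m})\). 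Summing within column \(m\) yields \(\sum_k |J_{km}| \le 2\sum_{k \ne m} g_i^{k,m}(\bar z_{k,m})\); a further sum over \(m\) reproduces exactly the target constant \(1/\beta_i\).

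The main obstacle I anticipate is the off-diagonal Jacobian bound: transforming the partial derivative of the multivariate probability \(p_i^{(k)}(u)\) into a clean upper bound by the univariate marginal density of a pairwise difference at its mode requires a careful disintegration of the joint density of the error differences. No independence or log-concavity is being assumed, so the remaining \(K-2\) inequalities have to be treated as a conditional probability weight bounded by one; this disintegration is the delicate technical step, and it is precisely where the finiteness of the modes guaranteed by Assumption \ref{ass:agents_behavior} enters in an essential way.
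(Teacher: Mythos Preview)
The paper does not actually supply a proof of this lemma: it is stated with the attribution \texttt{[\textbackslash cite\{prox\}]} and the preceding sentence says ``This has been recently shown in \cite{prox}.'' So there is no in-paper argument to compare against; the result is simply imported.

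Your outline is nevertheless a sound reconstruction of how such a result is typically obtained, and the main steps are correct. The duality you invoke (smoothness of $E_i$ w.r.t.\ $\|\cdot\|_\infty$ $\Leftrightarrow$ strong convexity of $E_i^*$ w.r.t.\ $\|\cdot\|_1$) is the standard Fenchel/Baillon--Haddad equivalence, applicable here since $E_i$ is finite, convex and $C^1$ on all of $\R^K$ under Assumption~\ref{ass:joint density}. The disintegration step you flag is indeed the crux: with $Z^{(\ell)}=\epsilon_i^{(\ell)}-\epsilon_i^{(k)}$ one has
\[
-\frac{\partial p_i^{(k)}}{\partial u^{(m)}}
= g_{Z^{(m)}}\!\bigl(u^{(k)}-u^{(m)}\bigr)\cdot
\Prob\!\left(Z^{(\ell)}\le u^{(k)}-u^{(\ell)},\ \ell\ne k,m \,\middle|\, Z^{(m)}=u^{(k)}-u^{(m)}\right),
\]
and bounding the conditional probability by $1$ and the marginal density by its modal value gives exactly your off-diagonal estimate. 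The diagonal is then handled via the column identity $\sum_k \partial p_i^{(k)}/\partial u^{(m)}=0$, and the $\ell_\infty\!\to\!\ell_1$ operator norm is controlled by the total absolute entry sum, yielding the announced $1/\beta_i$.

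One small point worth tightening when you write it up: be explicit that Assumption~\ref{ass:joint density} gives the vector $(Z^{(\ell)})_{\ell\ne k}$ a joint Lebesgue density on $\R^{K-1}$ (via the linear change of variables $\epsilon\mapsto(\epsilon^{(k)},(Z^{(\ell)})_{\ell\ne k})$ and marginalisation), so that the conditional law in the display above is well defined; this is exactly where the ``careful disintegration'' you mention lives, and without it the derivative formula is formal. Also make clear that ``finite mode'' in Assumption~\ref{ass:agents_behavior} is being used as ``the density is bounded and attains its supremum,'' since that is what your inequality $g_i^{k,m}(\cdot)\le g_i^{k,m}(\bar z_{k,m})$ needs.
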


Let us review important discrete choice models in accordance with Assumption \ref{ass:agents_behavior}, where convexity parameters can be explicitly estimated.
\begin{remark}\label{rem:D.C:models}
	 In the multinomial logit model (MNL),  the error terms are IID Gumbel distributed with zero location parameter and variance $\frac{\pi\cdot \mu }{\sqrt{6}}$, where $ \mu > 0$ \cite{palma}. The choice probabilities are:
	\[
	\Prob \left( u^{(k)} + \epsilon^{(k)} = \max_{1 \leq m \leq K} u^{(m)} + \epsilon^{(m)}\right)= \frac{e^{\nicefrac{u^{(k)}}{\mu}}}{\displaystyle
		\sum_{m=1}^{K}e^{\nicefrac{u^{(m)}}{\mu}}}, \quad k=1, \ldots, K.
	\]
	From the choice probabilities we can  conclude that the parameter $\mu$ reflects the randomness of the decision. If the latter converges to zero, this would lead to the deterministic decision based on the observable utility. On the  other hand, very large values of the parameter provide very random choices, tending towards the uniform distribution in the limit. The convex conjugate of the corresponding surplus function is up to an additive constant: 
	\[
	E^*(p)=\mu\sum_{k=1}^{K}  p^{(k)} \cdot\ln p^{(k)}.
	\]
	It is well known from Pinsker inequality that this function is $\mu$-strongly convex w.r.t. the norm $\|\cdot\|_1$.
	Another famous example is the nested logit model (NL) introduced in \cite{gev}. Compared to the MNL, the NL is more appropriate in situations where some of the alternatives are correlated, i.e. the axiom of irrelevance of independent alternatives is violated, see e.\,g. \cite{palma}. In the NL, each alternative $k$ belongs to one of $L$ different nests $N_\ell \subset \{1, \ldots, K\}$ for $\ell= 1, \ldots, L$. The choice probabilities for $k \in N_\ell$, $\ell \in L$ are
	\[
	\Prob \left( u^{(k)} + \epsilon^{(k)} = \max_{1 \leq m \leq K} u^{(m)} + \epsilon^{(j)}\right)= 
	\frac{e^{\mu_\ell \ln \sum_{m \in N_\ell} e^{\nicefrac{u^{(m)}}{\mu_\ell}}}}{\displaystyle
		\sum_{\ell \in L} e^{\mu_\ell \ln \sum_{m \in N_\ell} e^{\nicefrac{u^{(m)}}{\mu_\ell}}}}\cdot   \frac{e^{\nicefrac{u^{(k)}}{\mu_\ell}}}{\displaystyle
		\sum_{m\in N_\ell} e^{\nicefrac{u^{(m)}}{\mu_\ell}}},
	\]
	where the following condition shall be satisfied:
	\[
	0 < \mu_\ell \le 1, \quad \ell = 1, \ldots, L.
	\]
	The parameter $\mu_\ell$ determines the randomness of choices within the $\ell$-th nest. Further, the correlation of alternatives within the same $\ell$-th nest is given by $1 - \mu_\ell^2$.
	The convex conjugate of the NL surplus function has been derived up to an additive constant in \cite{shum}:
\[
   E^*(p) = \sum_{\ell \in L} \mu_\ell \sum_{i\in N_\ell} p^{(m)} \ln p^{(m)} 
   + \sum_{\ell \in L} \left(1-\mu_\ell\right) \left(\sum_{m\in N_\ell} p^{(m)} \right) \ln \left(\sum_{m\in N_\ell} p^{(m)} \right).
\]
	It is $\displaystyle \left(\min_{\ell \in L} \mu_\ell \right)$-strongly convex w.r.t. the norm $\|\cdot\|_1$. In \cite{dynamic}, analogous results were obtained for general nested logit models (GNL). \qed
\end{remark}

We state Lemma \ref{lem:StrCvx.h} concerning the strong convexity of the function $h$ defined in \eqref{eq:dfn.f}.
\begin{lem}\label{lem:StrCvx.h}
	Let the functions $E_i^*$ be $\beta_i$-strongly convex w.r.t. the norm $\|\cdot\|_1$, $i=1, \ldots, N$. Then, the function $h$ is $\sigma_2$-strongly convex w.r.t. the norm $\|\cdot\|_\mathbb{H}$, where 
	\[
	\sigma_2=\min_{1 \le i \le N} \beta_i.
	\]  
\end{lem}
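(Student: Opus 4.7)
The plan is to exploit two structural facts: first, that $h$ is separable as a sum over the agents $i$, and second, that the norm $\|\cdot\|_\mathbb{H}$ is itself defined as the $\ell_2$-aggregation of the per-agent $\ell_1$-norms $\|p_i\|_1$. Together these make the result essentially a bookkeeping exercise on the definition of strong convexity.

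Concretely, I would fix $P = (p_1,\ldots,p_N)$ and $Q = (q_1,\ldots,q_N)$ in $\Delta_K^N$ together with $\alpha \in [0,1]$, and apply the $\beta_i$-strong convexity of each $E_i^*$ with respect to $\|\cdot\|_1$ to write
\[
E_i^*(\alpha p_i + (1-\alpha) q_i) \le \alpha E_i^*(p_i) + (1-\alpha) E_i^*(q_i) - \alpha(1-\alpha) \frac{\beta_i}{2} \|p_i - q_i\|_1^2
\]
for each $i = 1,\ldots,N$. Summing these inequalities and using the separable structure of $h$ gives
\[
h(\alpha P + (1-\alpha) Q) \le \alpha h(P) + (1-\alpha) h(Q) - \alpha(1-\alpha) \frac{1}{2} \sum_{i=1}^{N} \beta_i \|p_i - q_i\|_1^2.
\]

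The remaining step is to replace each $\beta_i$ by the uniform lower bound $\sigma_2 = \min_i \beta_i$, which yields
\[
\sum_{i=1}^{N} \beta_i \|p_i - q_i\|_1^2 \ge \sigma_2 \sum_{i=1}^{N} \|p_i - q_i\|_1^2 = \sigma_2 \, \|P - Q\|_\mathbb{H}^2,
\]
by definition of the norm $\|\cdot\|_\mathbb{H}$. Plugging this back gives exactly the strong convexity inequality for $h$ with parameter $\sigma_2$ in the $\|\cdot\|_\mathbb{H}$-norm.

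There is no real obstacle here; the only point that must be handled cleanly is to match the norm used to aggregate the per-agent distances. Specifically, one must make sure that $\|P - Q\|_\mathbb{H}^2 = \sum_i \|p_i - q_i\|_1^2$ (not a mixed $\ell_1$ or Frobenius quantity), so that the sum $\sum_i \beta_i \|p_i - q_i\|_1^2$ is directly comparable to $\sigma_2 \|P - Q\|_\mathbb{H}^2$ after the uniform bound. Once that compatibility is noted, the proof closes in a single line.
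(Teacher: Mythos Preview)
Your proposal is correct and follows essentially the same route as the paper's own proof: apply the per-agent strong convexity of each $E_i^*$, sum over $i$, replace each $\beta_i$ by $\sigma_2 = \min_i \beta_i$, and identify $\sum_i \|p_i - q_i\|_1^2$ with $\|P-Q\|_\mathbb{H}^2$. There is no substantive difference between your argument and the one in the paper.
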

\begin{proof}
	Take any $P,Q \in \Delta_K^N, \alpha \in \left[0,1\right]$. Then the following holds
	\begin{align*}
	&h(\alpha\cdot P + (1-\alpha)\cdot Q) \\ = &\sum_{i=1}^{N}E^*_i\left(\alpha\cdot p_i + (1-\alpha)\cdot q_i\right) \\ \le & \alpha\cdot\sum_{i=1}^{N}E^*_i(p_i) + (1-\alpha) \cdot\sum_{i=1}^{N}E^*_i(q_i)    - \alpha\cdot (1-\alpha) \cdot \sum_{i=1}^{N}\frac{\beta_i}{2} \|p_i-q_i\|_1^2 \\
	\le &\alpha\cdot\sum_{i=1}^{N}E^*(p_i) + (1-\alpha) \cdot\sum_{i=1}^{N}E^*(q_i)    - \alpha\cdot (1-\alpha) \cdot \frac{\sigma_2}{2} \cdot \sum_{i=1}^{N} \|p_i-q_i\|_1^2 \\
	= &\alpha\cdot h(P) + (1-\alpha)\cdot h(Q) - \alpha\cdot (1-\alpha) \cdot \frac{\sigma_2}{2} \cdot \|P-Q\|^2_\mathbb{H}.
	\end{align*}
\end{proof}
Hence, the worst convexity parameter amongst all agents determines the strong convexity of the function $h(P)$.

In order to apply results from Section \ref{sec:inex.alt}, we secondly need to show that 
\[
 f(X) =  -\sum_{k=1}^{K} \frac{1}{\eta_k} \cdot \pi_k\left(M^t\cdot x_k\right)
\]
is strongly convex w.r.t. the norm
\[
\|X\|_{\mathbb{F}} = \left[\sum_{k=1}^{K} \|x_k\|_2^2\right]^\frac{1}{2}, \quad X \in \Delta_n^K.
\]
For that, we need to assume that the underlying network is regular.

\begin{ass}
    Let for the smallest singular value of $M$ hold $\sigma_{\min}\left(M\right) > 0$.
\end{ass}

As a consequence, we are able to estimate the convexity parameter of $f$ w.r.t. the norm $\|\cdot\|_{\mathbb{F}}$.

\begin{lem}\label{lem:StrConv.f}
	The function $f$ is $\sigma_1$-strongly convex w.r.t. the norm $\|\cdot\|_{\mathbb{F}}$, where
	\[
       \sigma_1= \min_{1 \le k \le K} \frac{\tau_k}{\eta_k} \cdot \left[\sigma_{\min}\left(M\right)\right]^{2t}.
	\]
\end{lem}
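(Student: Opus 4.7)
The plan is to unpack the definition of strong convexity, push the inequality through each summand individually, and use the singular-value bound to translate strong convexity of $\pi_k$ on the range of $M^t$ into strong convexity of $f$ on $X$-space.

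First I would fix $X, Y \in \Delta_n^K$ and $\alpha \in [0,1]$ and use linearity of $M^t$ to write
\[
f(\alpha X + (1-\alpha) Y) = -\sum_{k=1}^{K} \frac{1}{\eta_k}\, \pi_k\bigl(\alpha M^t x_k + (1-\alpha) M^t y_k\bigr).
\]
Applying the $\tau_k$-strong concavity of $\pi_k$ w.r.t. $\|\cdot\|_2$ (Assumption~\ref{ass:Payoff.Fct}) summand-by-summand yields
\[
f(\alpha X + (1-\alpha) Y) \le \alpha f(X) + (1-\alpha) f(Y) - \alpha(1-\alpha)\sum_{k=1}^{K} \frac{\tau_k}{2\eta_k}\,\bigl\| M^t(x_k - y_k)\bigr\|_2^2.
\]

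Next I would bound the $M^t$-norm from below using the smallest singular value. The key step is the inequality $\|M^t v\|_2 \ge [\sigma_{\min}(M)]^{t}\,\|v\|_2$ for every $v \in \R^n$, which follows by induction on $t$: from $\|Mw\|_2 \ge \sigma_{\min}(M)\|w\|_2$ applied iteratively to $w = M^{t-1}v, M^{t-2}v, \ldots, v$. Hence $\| M^t(x_k-y_k)\|_2^2 \ge [\sigma_{\min}(M)]^{2t}\|x_k-y_k\|_2^2$. Plugging this in and pulling the minimum out gives
\[
\sum_{k=1}^{K} \frac{\tau_k}{\eta_k}\,\bigl\| M^t(x_k - y_k)\bigr\|_2^2 \;\ge\; \sigma_1 \sum_{k=1}^{K} \|x_k - y_k\|_2^2 \;=\; \sigma_1\,\|X-Y\|_\mathbb{F}^2,
\]
with $\sigma_1 = \min_{1\le k \le K} \frac{\tau_k}{\eta_k}\,[\sigma_{\min}(M)]^{2t}$. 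Combining the two chains of inequalities delivers precisely the strong-convexity inequality with parameter $\sigma_1$.

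I do not expect any real obstacle here: the main (trivial) point to get right is the singular-value inequality for powers of $M$, since in general $\sigma_{\min}(M^t)$ need not equal $[\sigma_{\min}(M)]^t$, but the direction we need, $\sigma_{\min}(M^t) \ge [\sigma_{\min}(M)]^t$, is exactly what the induction gives and is all that is used. Everything else is a direct composition of the definitions.
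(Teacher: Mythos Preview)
Your proposal is correct and follows essentially the same route as the paper: first establish $\sigma_{\min}(M^t)\ge[\sigma_{\min}(M)]^t$ by iterating the singular-value bound, then apply the $\tau_k$-strong concavity of each $\pi_k$ summand-by-summand and aggregate via the $\|\cdot\|_{\mathbb{F}}$ norm. The paper's proof is in fact slightly less explicit than yours, deferring the final aggregation step to ``analogously to the proof of Lemma~\ref{lem:StrCvx.h}'', whereas you write it out directly.
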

\proof First, we recall:
\[
 \sigma_{\min}\left(M\right)=\min_{\|x\|_2=1} \|M\cdot x\|_2.   
\]
Hence, we get:
	\begin{align*}
	\sigma_{\min}\left(M^t\right) &= 	\sigma_{\min}\left(M\cdot M^{t-1}\right) = \min_{\|x\|_2=1} \left\| M\cdot M^{t-1}\cdot x \right\|_2 \\
	&\ge \sigma_{\min}\left(M\right)  \cdot\min_{\|x\|_2=1} \left\|M^{t-1}\cdot x\right\|_2 \ge \ldots \ge  \left[\sigma_{\min}\left(M\right)\right]^t.
	\end{align*} 
		 For any $\alpha \in [0,1]$ and $X,Z \in \Delta_n^K$ it holds due to the $\tau_k$-strong convexity of $-\pi_k$ w.r.t. the norm $\|\cdot\|_2$:
		 \begin{align*}
	  & -\pi_k\left(\alpha\cdot M^t\cdot x_k + \left(1-\alpha\right)\cdot M^t\cdot z_k\right) \\  
	  \le & \alpha \cdot \pi_k\left(M^t\cdot x_k\right) - \left(1-\alpha\right)\cdot \pi_k\left(M^t\cdot z_k\right) \\ &-\alpha\cdot \left(1-\alpha\right) \cdot \frac{\tau_k}{2}\cdot  \|M^t x_k - M^t z_k\|_2^2.
	\end{align*} 
Further, we have:
\[
  \|M^t x_k - M^t z_k\|_2 \geq \sigma_{\min}\left(M^t\right)\cdot \| x_k - z_k\|_2\geq  \left[\sigma_{\min}\left(M\right)\right]^t \cdot \| x_k - z_k\|_2.
\]
Hence, the convexity parameter of $-\pi_k\left(M^t x_k\right)$ is $\tau_k \cdot \left[\sigma_{\min}\left(M\right)\right]^{2t}$.
The assertion follows analogously to the proof of Lemma \ref{lem:StrCvx.h}. \qed

Note that the considerations in the proof of Lemma \ref{lem:StrConv.f} also guarantee the existence of a unique minimizer $x^*_k(P)$ for each objective function in \eqref{eq:OrgMin}, i.e. the manipulation matrix $X_*(P)$ is indeed well defined.


It remains to inspect the multiplicative term.  We study the Lipschitz-continuity property of the operator  
\[
   G(X) = \left(g_1(X), \ldots, g_N(X)\right) \in \R^{K \times N},
\]
where
\[
g_i(X) = \left(\|v_i - M^t\cdot x_1\|_2, \ldots, \|v_i - M^t\cdot x_K\|_2\right)^T, \; i=1,\ldots, N. 
\]
For that, the dual norm of $\|\cdot\|_\mathbb{H}$ is required, see \cite{nesterov2020soft}:
\[
\|Z\|^*_\mathbb{H} = \left[\sum_{i=1}^{N} \|z_i\|_\infty^2\right]^{\frac{1}{2}}, \quad Z \in \R^{K \times N}.
\]

\begin{lem}
   The operator $G$ is Lipschitz-continuous with modulus 
   \[
     L_1 = N^\frac{1}{2} \cdot \left[\sigma_{\max}\left(M\right)\right]^t,
   \]
   where $\sigma_{\max}$ denotes the largest singular value of $M$. This is to say that
   \[
\|G(X) - G(Y)\|_{\mathbb{H}}^* \le  L_1 \cdot \|X-Y\|_{\mathbb{F}}, \quad X,Y \in \Delta^K_n.
\]
\end{lem}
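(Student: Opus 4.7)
The plan is to unfold both norms entry by entry and reduce the statement to the operator-norm bound $\sigma_{\max}(M^t) \le [\sigma_{\max}(M)]^t$ combined with the reverse triangle inequality for $\|\cdot\|_2$.

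First I would expand the dual norm directly from its definition, writing
\[
\|G(X)-G(Y)\|_\mathbb{H}^{*2} = \sum_{i=1}^N \|g_i(X)-g_i(Y)\|_\infty^2
= \sum_{i=1}^N \max_{1\le k\le K}\Bigl|\,\|v_i - M^t x_k\|_2 - \|v_i - M^t y_k\|_2\,\Bigr|^2.
\]
The key observation is that for each fixed pair $(i,k)$, the reverse triangle inequality applied to $\|v_i-M^t x_k\|_2$ and $\|v_i-M^t y_k\|_2$ eliminates the dependence on $v_i$, yielding
\[
\Bigl|\,\|v_i - M^t x_k\|_2 - \|v_i - M^t y_k\|_2\,\Bigr| \le \|M^t(x_k - y_k)\|_2.
\]

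Next I would invoke the standard operator-norm bound $\|M^t w\|_2 \le \sigma_{\max}(M^t)\|w\|_2$ and the submultiplicativity $\sigma_{\max}(M^t)\le [\sigma_{\max}(M)]^t$ (the latter follows by induction from $\|M\cdot M^{t-1} w\|_2 \le \sigma_{\max}(M)\|M^{t-1}w\|_2$, mirroring the argument already used for $\sigma_{\min}$ in Lemma \ref{lem:StrConv.f}). This gives a uniform-in-$i$ bound
\[
\|g_i(X)-g_i(Y)\|_\infty \le [\sigma_{\max}(M)]^t \cdot \max_{1\le k\le K}\|x_k-y_k\|_2.
\]

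Finally I would sum over $i=1,\dots,N$: since the right-hand side is independent of $i$, the sum contributes a factor $N$, producing
\[
\|G(X)-G(Y)\|_\mathbb{H}^{*2} \le N\cdot [\sigma_{\max}(M)]^{2t}\cdot \max_{1\le k\le K}\|x_k-y_k\|_2^2
\le N\cdot [\sigma_{\max}(M)]^{2t}\cdot \sum_{k=1}^K \|x_k-y_k\|_2^2,
\]
and the last sum is exactly $\|X-Y\|_\mathbb{F}^2$. Taking square roots delivers the claimed constant $L_1 = N^{1/2}\cdot [\sigma_{\max}(M)]^t$. There is no serious obstacle here; the only mild subtlety is that the $N^{1/2}$ factor arises because the per-row estimate is uniform in $i$, so one replaces the inner maximum by a sum without improvement, while the outer sum over $i$ contributes the full $N$.
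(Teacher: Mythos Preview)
Your proof is correct and follows essentially the same route as the paper's: expand $\|\cdot\|_{\mathbb{H}}^{*}$, apply the reverse triangle inequality to drop $v_i$, bound $\|M^t(x_k-y_k)\|_2$ via $[\sigma_{\max}(M)]^t$, and then pass from $\max_k$ to $\sum_k$ before summing the $i$-independent bound over $i$ to pick up the factor $N$. The paper's argument is identical step for step.
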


\proof The Lipschitz-continuity of $G$ follows mainly from \cite{nesterov2020soft}. In fact, take any $X,Y \in \Delta^K_n$:
\[
\|G(X) - G(Y)\|_{\mathbb{H}}^* = \left[\sum_{i=1}^{N} \|g_i(X) - g_i(Y)\|_\infty^2\right]^{\frac{1}{2}}.
\]
It holds by means of the triangle inequality: 
\[
 \begin{array}{rcl}
   \left| g_i(X)-g_i(Y) \right|& = & \left| \|v_i-M^t\cdot x_k\|_2 -\|v_i-M^t\cdot y_k\|_2 \right| \\ \\ &\le& \|M^t x_k- M^t y_k\|_2 \le \left[\sigma_{\max}\left(M\right)\right]^{t} \cdot \|x_k-y_k\|_2.   
 \end{array}
\]
Therefore,
\begin{align*}
\|g_i(X) - g_i(Y)\|_\infty^2 &= \left(\max_{1 \le k\le K} \left| \|v_i-M^t\cdot x_k\|_2 - \|v_i-M^t\cdot y_k\|_2 \right| \right)^2 \\ &\le  \left[\sigma_{\max}\left(M\right)\right]^{2t} \cdot \max_{1 \le k\le K}\|x_k-y_k\|_2^2 \\ &\le \left[\sigma_{\max}\left(M\right)\right]^{2t} \cdot \sum_{k=1}^{K}\|x_k-y_k\|_2^2,
\end{align*}
and the assertion follows. \qed

Note that all components of $G$ are convex and nonnegative. Moreover, all entries of the matrices $P$ are nonnegative. Due to Lemmata \ref{lem:StrCvx.h} and \ref{lem:StrConv.f}, $\Phi(\cdot,P)$ is strongly convex for any fixed $P \in \Delta_K^N$ and  $\Phi(X,\cdot)$ is strongly convex for any fixed $X \in \Delta_n^K$. We therefore conclude that the alternating update steps of our network manipulation algorithm are well defined. 

Let us finally present our main results on the convergence of the network manipulation algorithm. Recall that the derived constants are as follows:
		\begin{equation}
		    \label{eq:hhh1}
		    \sigma_1=\min_{1 \le k \le K} \frac{\tau_k}{\eta_k} \cdot \left[\sigma_{\min}\left(M\right)\right]^{2t}, \quad \sigma_2=\min_{1 \le i \le N} \beta_i,
		\end{equation}
		 and
		 	\begin{equation}
		    \label{eq:hhh2}
		    L_1 =N^\frac{1}{2} \cdot \left[\sigma_{\max}\left(M\right)\right]^t, \quad L_2=1.
		\end{equation}
Moreover, for the rate of convergence we have:
\	\begin{equation}
		    \label{eq:hhh3}
  \lambda = \frac{L_1^2\cdot L_2^2}{\sigma_1 \cdot \sigma_2} =  \frac{N \cdot \left[\kappa(M)\right]^{2 t}}{\displaystyle \min_{1 \le k \le K} \frac{\tau_k}{\eta_k} \cdot \min_{1 \le i \le N} \beta_i},
\end{equation}
where $\kappa(M)$ denotes the condition number of the matrix $M$. 
In order to establish convergence of the network manipulation algorithm, we need an additional assumption which indicates a certain stability for the model. 
\begin{ass}\label{ass:StrConvCondParam}
	It holds:
	\begin{equation}
	    \label{eq:ass-cr}
	\left[\kappa\left(M\right)\right]^t < {\left(\frac{\displaystyle \min_{1 \le k \le K} \frac{\tau_k}{\eta_k} \cdot \min_{1 \le i \le N} \beta_i}{N}\right)}^{\frac{1}{2}}.
	\end{equation}
\end{ass} 
Assumption \ref{ass:StrConvCondParam} is a version of condition \eqref{eq:conv.cond}, which enforces $\lambda < 1$. The latter guarantees strong convexity of the potential function \eqref{eq:Pot.2}.
The straightforward application of Theorem \ref{thm:convergence.inexact.alt.min} now provides:

\begin{thm}
    	Let $\left(X^*,P^*\right) \in \Delta_n^K \times \Delta_K^N$ be the unique minimizer of the potential function \eqref{eq:Pot.2}. Then, for the sequences $\{\tilde{X}_\ell\}_{\ell \ge 0}$ and $\{\tilde{P}_\ell\}_{\ell \ge 1}$ it  holds: 		\begin{equation}\label{eq:optim.inex.estim.xa}
		\left\|\tilde{X}_{\ell+1} - X^*\right\|_{\mathbb{F}} \le \lambda^{\ell+1} \left\|X_0 - X^*\right\|_{\mathbb{F}} + \sqrt{\frac{2\delta_2}{\sigma_1}} + \frac{L_1\cdot L_2}{\sigma_1}\cdot \sqrt{\frac{2\delta_1}{\sigma_2}}
		\end{equation}
		and 
		\begin{equation}\label{eq:optim.inex.estim.pa}
		\left\|\tilde{P}_{\ell+1} - P^*\right\|_{\mathbb{H}} \le \lambda^{\ell}\left[\left\|P_1 - P^*\right\|_{\mathbb{H}} + \sqrt{\frac{2\delta_1}{\sigma_2}} \right] + \sqrt{\frac{2\delta_1}{\sigma_2}}  + \frac{L_1\cdot L_2}{\sigma_2}\cdot \sqrt{\frac{2\delta_2}{\sigma_1}},
		\end{equation}
		where $\sigma_1$, $\sigma_2$, $L_1$, $L_2$, and $\lambda$ are given in \eqref{eq:hhh1}-\eqref{eq:hhh3}.
\end{thm}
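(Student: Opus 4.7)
The plan is to apply Theorem~\ref{thm:convergence.inexact.alt.min} from Section~\ref{sec:inex.alt} directly to the potential function \eqref{eq:Pot.2}, with the specific constants that have been established throughout Section~\ref{sec:Converg}. The whole argument is thus a verification that the abstract hypotheses of that theorem are satisfied in our concrete setting, followed by substitution of parameters.

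First, I would match the decomposition \eqref{eq:Pot.2} against the template \eqref{eq:obj.fct.nest}: set $G_1(X) = G(X)$ and $G_2(P) = P$, so that the identity map in the second block yields $L_2 = 1$, while the Lipschitz modulus $L_1 = N^{1/2}\bigl[\sigma_{\max}(M)\bigr]^t$ comes from the last lemma before Assumption~\ref{ass:StrConvCondParam}. The strong convexity parameters are exactly $\sigma_1$ and $\sigma_2$ from \eqref{eq:hhh1}, granted by Lemmas~\ref{lem:StrConv.f} and~\ref{lem:StrCvx.h} with respect to $\|\cdot\|_\mathbb{F}$ and $\|\cdot\|_\mathbb{H}$ respectively.

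Second, I would verify the two remaining hypotheses of the abstract setup. The interaction term
\[
\langle G(X), P\rangle \;=\; \sum_{k=1}^K \sum_{i=1}^N p_i^{(k)}\,\|v_i - M^t x_k\|_2
\]
is convex and closed in $X \in \Delta_n^K$ for fixed $P \in \Delta_K^N$, since each summand is a nonnegative combination of the convex continuous map $x_k \mapsto \|v_i - M^t x_k\|_2$; it is linear, hence convex and closed, in $P$ for fixed $X$. The strict inequality $L_1^2 L_2^2 < \sigma_1 \sigma_2$ is precisely Assumption~\ref{ass:StrConvCondParam}, rewritten in \eqref{eq:ass-cr} in terms of the condition number $\kappa(M)$; this in turn yields $\lambda < 1$ as in \eqref{eq:hhh3} and guarantees the strong convexity of $\Phi$ on $\Delta_n^K \times \Delta_K^N$, so that the unique minimizer $(X^*, P^*)$ exists.

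Once these ingredients are in place, the bounds \eqref{eq:optim.inex.estim.xa} and \eqref{eq:optim.inex.estim.pa} follow by applying \eqref{eq:optim.inex.estim.x} and \eqref{eq:optim.inex.estim.p} of Theorem~\ref{thm:convergence.inexact.alt.min} and substituting the explicit values of $\sigma_1$, $\sigma_2$, $L_1$, $L_2$, $\lambda$ from \eqref{eq:hhh1}--\eqref{eq:hhh3}. I do not expect a genuine obstacle here: the analytical heavy lifting (strong convexity of both blocks, Lipschitz continuity of $G$, and Definition~\ref{dfn:delta.sol} for the inexact minimizers $\tilde P_{\ell+1}$, $\tilde X_{\ell+1}$) has already been done. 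The only mild care needed is to confirm that the product norms $\|\cdot\|_\mathbb{F}$ and $\|\cdot\|_\mathbb{H}$ used in Section~\ref{sec:Converg} are the ones against which Lemma~\ref{lem:T.delta.bound} and Proposition~\ref{cor:diff.inex.iter} were stated, which is indeed the case since those results are norm-agnostic and only require the block-wise strong convexity and Lipschitz constants plugged in above.
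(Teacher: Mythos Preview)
Your proposal is correct and matches the paper's approach exactly: the paper simply states that the result follows from ``the straightforward application of Theorem~\ref{thm:convergence.inexact.alt.min}'' after the constants $\sigma_1$, $\sigma_2$, $L_1$, $L_2$, $\lambda$ have been identified in \eqref{eq:hhh1}--\eqref{eq:hhh3}. If anything, your verification of the hypotheses (convexity of the interaction term, matching $G_1=G$, $G_2=\mathrm{id}$, and the role of Assumption~\ref{ass:StrConvCondParam}) is more explicit than what the paper writes.
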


Let us comment on Assumption \ref{ass:StrConvCondParam} by elaborating how the model parameters enter into the inequality \eqref{eq:ass-cr}:
\begin{itemize}
    \item[] {\bf Interaction network.} The network structure plays a key role in \eqref{eq:ass-cr}. This is reflected by the condition number $\kappa\left(M\right)$. Its large values cause instability of manipulation, since small changes regarding the aspired states could lead to big changes of optimal starting distributions. 
    In other words, a more predictive pattern of network transitions   speeds up the convergence. The minimum value of the condition number is attained for permutation matrices. In this case, the network interaction is obviously predictable, i.e. organizations can easily determine how network participants distribute information. For similar reasons, the number of interaction periods $t$ has a negative impact on possibility of manipulation. More periods hamper the influence of the starting distribution on the resulting state. Instead, if time progresses the latter is mainly determined just by the network structure independently of the starting distributions.
    \item[] {\bf Agents.} Clearly, more agents $N$ slow down the rate, as organizations have to pay attention to more aspired states. Moreover, large values of $\beta_i$, $i=1, \ldots, N$,  improve the rate of convergence. In order to interpret this fact, we refer to Remark \ref{rem:D.C:models}. There, it has been shown that $\beta_i$'s, can be viewed as measures that agents are still pretty uncertain about their decisions. Due to the duality of discrete choice and rational inattention, agent prone to errors have high information processing costs. Thus, these agent pay less attention to the observable utility, i.e. if their aspired states were reached. The fact that imperfect behavior of agents could help to faster stabilize economic systems was recently also described in \cite{dynamic}.
    \item[] {\bf Organizations.} The parameters $\eta_{k}$, $k=1, \ldots, K$, reflect to which extent organizations take into account their network payoffs. If $\eta_k$'s are relatively large, the organizations focus mainly on reaching the agents' aspired states. 
    It seems surprising that this would not improve the convergence rate of the network manipulation algorithm, but actually worsen it. However, if organizations do not properly act on the network by maximizing their profits, their manipulation power diminishes, since they lose their credibility -- e.g., their followers may be disappointed by getting biased information and leave them. 
    Organizations thus become worthless for agents in terms of manipulation and, as consequence, the network manipulation algorithm becomes less efficient. Hence, the parameter mirrors a certain credibility of the organizations. Further, the impact of $\tau_k$, $k=1, \ldots, K$, on the convergence rate becomes clear if we interpret the latter as measures of organizations' reluctance to change their starting distributions. Form this point of view, the conservative behavior of organizations towards profit maximization makes the network manipulation more stable.
\end{itemize}

Additionally, it is worth to mention that the parameters $\beta_i$'s, $\eta_k$'s, and $\tau_k$'s, which reflect the behavior of agents and organizations, also affect the error bounds in \eqref{eq:optim.inex.estim.xa} and \eqref{eq:optim.inex.estim.pa}. The corresponding interpretation is similar to that for the convergence rate. Namely, the agents' imperfect behavior and organizations' conservatism in profit maximization reduce the accumulated errors.

\bibliographystyle{amsplain}
\bibliography{Literatur_2}

\providecommand{\bysame}{\leavevmode\hbox to3em{\hrulefill}\thinspace}
\providecommand{\MR}{\relax\ifhmode\unskip\space\fi MR }
\providecommand{\MRhref}[2]{%
  \href{http://www.ams.org/mathscinet-getitem?mr=#1}{#2}
}
\providecommand{\href}[2]{#2}
\begin{thebibliography}{10}

\bibitem{acemoglu2011opinion}
Daron Acemoglu and Asuman Ozdaglar, \emph{Opinion dynamics and learning in
  social networks}, Dynamic Games and Applications \textbf{1} (2011), no.~1,
  3--49.

\bibitem{palma}
S.~P. Anderson, A.~De Palma, and L.-F. Thisse, \emph{Discrete choice theory of
  product differentiation}, MIT press, 1992.

\bibitem{beck2015convergence}
Amir Beck, \emph{On the convergence of alternating minimization for convex
  programming with applications to iteratively reweighted least squares and
  decomposition schemes}, SIAM Journal on Optimization \textbf{25} (2015),
  no.~1, 185--209.

\bibitem{degroot1974reaching}
Morris~H DeGroot, \emph{Reaching a consensus}, Journal of the American
  Statistical Association \textbf{69} (1974), no.~345, 118--121.

\bibitem{forster2014trust}
Manuel F{\"o}rster, Ana Mauleon, and Vincent~J Vannetelbosch, \emph{Trust and
  manipulation in social networks}, Network Science \textbf{4} (2016), no.~2,
  216--243.

\bibitem{shum}
Mogens Fosgerau, Emerson Melo, Andre De~Palma, and Matthew Shum, \emph{Discrete
  choice and rational inattention: A general equivalence result}, International
  Economic Review \textbf{61} (2020), no.~4, 1569--1589.

\bibitem{gagniuc2017markov}
Paul~A Gagniuc, \emph{Markov chains: from theory to implementation and
  experimentation}, John Wiley \& Sons, 2017.

\bibitem{grippof1999globally}
Luigi Grippof and Marco Sciandrone, \emph{Globally convergent block-coordinate
  techniques for unconstrained optimization}, Optimization methods and software
  \textbf{10} (1999), no.~4, 587--637.

\bibitem{lindqvist1977fast}
Bo~Lindqvist, \emph{How fast does a markov chain forget the initial state? a
  decision theoretical approach}, Scandinavian Journal of Statistics (1977),
  145--152.

\bibitem{luo1993error}
Zhi-Quan Luo and Paul Tseng, \emph{Error bounds and convergence analysis of
  feasible descent methods: a general approach}, Annals of Operations Research
  \textbf{46} (1993), no.~1, 157--178.

\bibitem{gev}
D.~McFadden, \emph{Modeling the choice of residential location}, Transportation
  Research Record (1978), no.~673, 72--77.

\bibitem{prox}
David Müller, Yurii Nesterov, and Vladimir Shikhman, \emph{Discrete choice
  prox-functions on the simplex}, Mathematics of Operations Research (2021), To
  appear.

\bibitem{dynamic}
\bysame, \emph{Dynamic pricing under nested logit demand}, Journal of Pure and
  Applied Functional Analysis (2021), To appear.

\bibitem{nesterov2020soft}
Yurii Nesterov, \emph{Soft clustering by convex electoral model}, Soft
  Computing (2020), 1--12.

\bibitem{page1999pagerank}
Lawrence Page, Sergey Brin, Rajeev Motwani, and Terry Winograd, \emph{The
  pagerank citation ranking: Bringing order to the web.}, Tech. report,
  Stanford InfoLab, 1999.

\bibitem{7040315}
Y.~{Pu}, M.~N. {Zeilinger}, and C.~N. {Jones}, \emph{Inexact fast alternating
  minimization algorithm for distributed model predictive control}, 53rd IEEE
  Conference on Decision and Control, 2014, pp.~5915--5921.

\bibitem{stonyakin2019gradient}
Fedor~S Stonyakin, Darina Dvinskikh, Pavel Dvurechensky, Alexey Kroshnin,
  Olesya Kuznetsova, Artem Agafonov, Alexander Gasnikov, Alexander Tyurin,
  C{\'e}sar~A Uribe, Dmitry Pasechnyuk, et~al., \emph{Gradient methods for
  problems with inexact model of the objective}, International Conference on
  Mathematical Optimization Theory and Operations Research, Springer, 2019,
  pp.~97--114.

\bibitem{thurstone}
L.~Thurstone, \emph{A law of comparative judgment.}, Psychological review
  \textbf{34} (1927), no.~4, 273.

\bibitem{yang1997human}
Jie Yang, Yangsheng Xu, and Chiou~S Chen, \emph{Human action learning via
  hidden markov model}, IEEE Transactions on Systems, Man, and Cybernetics-Part
  A: Systems and Humans \textbf{27} (1997), no.~1, 34--44.

\end{thebibliography}

\end{document}